\renewcommand{\email}[2][]{%
  \ifx\emails\@empty\relax\else{\g@addto@macro\emails{,\space}}\fi%
  \@ifnotempty{#1}{\g@addto@macro\emails{\textrm{(#1)}\space}}%
  \g@addto@macro\emails{#2}%
}
\definecolor{lgreen}{rgb}{0.0, 0.48, 0.0}
\definecolor{lpurple}{rgb}{0.48, 0.0, 0.48}
\definecolor{bblue}{rgb}{0.2, 0.4, 0.8}
\renewcommand{\tocsection}[3]{%
  \indentlabel{\@ifnotempty{#2}{\bfseries\ignorespaces#1 #2\quad}}\bfseries#3}
\renewcommand{\tocsubsection}[3]{%
  \indentlabel{\@ifnotempty{#2}{\ignorespaces#1 #2\quad}}#3}
\Crefname{ALC@unique}{Line}{Lines}
\definecolor{blackred}{rgb}{0.6, 0.3, 0.3}
\definecolor{darkgray}{rgb}{.4,.4,.4}
\lstdefinelanguage{haskell}{
    keywords={Leaf, Unary, Motzkin, Binary},
    comment=[l]{--}
}
\ttfamily\linespread{4},
\def\Autoref#1{%
  \begingroup
  \edef\reserved@a{\cpttrimspaces{#1}}%
  \ifcsndefTF{r@#1}{%
    \xaftercsname{\expandafter\testreftype\@fourthoffive}
      {r@\reserved@a}.\\{#1}%
  }{%
    \ref{#1}%
  }%
  \endgroup
}
\def\testreftype#1.#2\\#3{%
  \ifcsndefTF{#1autorefname}{%
    \def\reserved@a##1##2\@nil{%
      \uppercase{\def\ref@name{##1}}%
      \csn@edef{#1autorefname}{\ref@name##2}%
      \autoref{#3}%
    }%
    \reserved@a#1\@nil
  }{%
    \autoref{#3}%
  }%
}
\newcommand{\set}[1]{\{#1\}}
\newcommand{\seq}[1]{\left(#1\right)}
\newcommand{\idx}[1]{\mbox{\underline{\sf #1}}}
\def\vec{\boldsymbol}
\DeclareMathOperator*{\Bern}{\mbox{Bern}}
\DeclareMathOperator*{\Seq}{\mbox{\sc Seq}}
\DeclareMathOperator*{\Set}{\mbox{\sc Set}}
\DeclareMathOperator*{\MSet}{\mbox{\sc MSet}}
\DeclareMathOperator*{\Cycle}{\mbox{\sc Cyc}}
\DeclareMathOperator*{\Cov}{\mbox{\rm Cov}}
\newcommand{\CS}[1]{\mathcal{#1}}
\newcommand{\At}{t}
\DeclareMathAlphabet\mathbfcal{OMS}{cmsy}{b}{n}
\newcommand{\mynewtheorem}[2]{
  \newaliascnt{#1}{dummy}
  \newtheorem{#1}[#1]{#2}
  \aliascntresetthe{#1}
  \expandafter\def\csname #1autorefname\endcsname{#2}
}
\theoremstyle{definition}
\theoremstyle{definition}
\begin{document}

\title{Polynomial tuning of multiparametric\\ combinatorial samplers}\thanks{
Maciej Bendkowski was partially supported within the Polish National Science
Center grant 2016/21/N/ST6/01032 and the French Government Scholarship within
the French-Polish POLONIUM grant number 34648/2016.  Olivier Bodini and Sergey
Dovgal were supported by the French project ANR project MetACOnc,
ANR-15-CE40-0014.
}
\author{Maciej Bendkowski${}^1$}
\address[1]{
      Theoretical Computer Science Department,
      Faculty of Mathematics and Computer Science,
      Jagiellonian University, {\L}ojasiewicza 6,
      30-348 Krak\'ow, Poland.
}
\email{bendkowski@tcs.uj.edu.pl}
\author{Olivier Bodini${}^2$}
\address[2]{
      Institut Galilée,
      Université Paris 13,
      99 Avenue Jean Baptiste Clément, 93430
      Villetaneuse, France.
}
\email{\{Olivier.Bodini, Dovgal\}@lipn.univ-paris13.fr}
\author{Sergey Dovgal${}^{2,3,4}$}
\address[3]{
      Institut de Recherche en Informatique Fondamentale,
      Université Paris 7,
      5 Rue Thomas Mann
      75013 Paris,
      France and (4)
      Moscow Institute of Physics and Technology,
      Institutskiy per. 9, Dolgoprudny, Russia 141700
}
\date{\today}

\maketitle

\begin{abstract} 
Boltzmann samplers and the recursive method are prominent algorithmic
    frameworks for the approximate-size and exact-size random generation of
    large combinatorial structures, such as maps, tilings, RNA sequences or
    various tree-like structures. In their multiparametric variants, these
    samplers allow to control the profile of expected values corresponding to
    multiple combinatorial parameters. One can control, for instance, the
    number of leaves, profile of node degrees in trees or the number of certain
    subpatterns in strings. However, such a flexible control requires an
    additional non-trivial tuning procedure. In this paper, we propose an
    efficient polynomial-time, with respect to the number of tuned parameters,
    tuning algorithm based on convex optimisation techniques.
    Finally, we illustrate the efficiency of our approach using
    several applications of rational, algebraic and Pólya structures including
    polyomino tilings with prescribed tile frequencies, planar
    trees with a given specific node degree distribution, and weighted
partitions.
\end{abstract}

\section{Introduction}
Uniform random generation of combinatorial structures forms a prominent
research area of computer science with multiple important applications ranging
from automated software testing techniques, see~\cite{Claessen-2000}, to
complex simulations of large physical statistical models,
see~\cite{BhaCouFahRan2017}. Given a formal specification defining a set of
combinatorial structures (for instance graphs, proteins or tree-like data
structures) we are interested in their efficient random sampling ensuring the
uniform distribution among all structures sharing the same size.

One of the earliest examples of a generic sampling template is Nijenhuis and
Wilf's recursive method~\cite{NijenhuisWilf1978} later systematised by
Flajolet, Zimmermann and Van Cutsem~\cite{FLAJOLET19941}. In this approach, the
generation scheme is split into two stages -- an initial preprocessing phase
where recursive branching probabilities dictating subsequent sampler decisions
are computed, and the proper sampling phase itself.  Alas, in both phases the
algorithm manipulates integers of size exponential in the target size $n$,
turning its effective bit complexity to $O(n^{3+\varepsilon})$, compared to
$\Theta(n^2)$ arithmetic operations required.  Denise and Zimmermann reduced
later the average-case bit complexity of the recursive method to $O(n \log n)$ in time
and $O(n)$ in space using a certified floating-point arithmetic
optimisation~\cite{DenZimm99}. Regardless, worst-case space bit complexity
remained $O(n^2)$ as well as bit complexity for non-algebraic languages.
Remarkably, for rational languages Bernardi and
Giménez~\cite{Bernardi2012} recently linked the floating-point optimisation of
Denise and Zimmermann with a specialised divide-and-conquer scheme reducing
further the worst-case space bit complexity and the average-case
time bit complexity to $O(n)$.

A somewhat relaxed, approximate-size setting of the initial generation problem
was investigated by Duchon, Flajolet, Louchard and Schaeffer who proposed a
universal sampler construction framework  of so-called Boltzmann
samplers~\cite{DuFlLoSc}. The key idea in their approach is to embed the
generation scheme into the symbolic method of analytic
combinatorics~\cite{flajolet09} and, in consequence, obtain an effective
recursive sampling template for a wide range of existing combinatorial classes.
In recent years, a series of important improvements was proposed for both
unlabelled and P\'{o}lya structures. Let us mention for instance linear
approximate-size (and quadratic exact-size) Boltzmann samplers for planar
graphs~\cite{fusy2005quadratic}, general-purpose  samplers for unlabelled
structures~\cite{flajolet2007boltzmann}, efficient samplers for plane
partitions~\cite{bodini2010random} or the cycle pointing operator for P\'{o}lya
structures~\cite{bodirsky2011boltzmann}. Moreover, the
framework was generalised onto differential
specifications~\cite{bodini2012boltzmann,bodini2016increasing};
linear exact-size samplers for Catalan and Motzkin trees were obtained, exploiting
the shape of their holonomic specifications~\cite{bacher2013exact}.

What was left open since the initial work of Duchon et al., was the development
of (i) efficient Boltzmann oracles providing effective means of evaluating
combinatorial systems within their disks of convergence and (ii) an automated
tuning procedure controlling the expected sizes of parameter values
of generated structures. The former problem was finally addressed by Pivoteau,
Salvy and Soria~\cite{PiSaSo12} who defined a rapidly converging combinatorial
variant of the Newton oracle by lifting the combinatorial version of Newton's
iteration of Bergeron, Labelle and Leroux~\cite{species} to a new numerical
level. In principle, using their Newton iteration and an appropriate use of
binary search, it became possible to approximate the singularity of a given
algebraic combinatorial system with arbitrarily high precision. However, even
if the singularity $\rho$ is estimated with precision $10^{-10}$ its
approximation quality does not correspond to an equally accurate approximation
of the generating function values at $\rho$, often not better than $10^{-2}$.
Precise evaluation at $z$ close to $\rho$ requires an extremely accurate
precision of $z$.  Fortunately, it is possible to trade-off the evaluation
precision for an additional rejection phase using the idea of analytic
samplers~\cite{BodLumRolin} retaining the uniformity even with rough evaluation
estimates.

Nonetheless, frequently in practical applications including for instance
semi-automated software testing techniques, additional control over the
internal structure of generated objects is required, see~\cite{palka2012}.
In~\cite{BodPonty} Bodini and Ponty proposed a multidimensional Boltzmann
sampler model, developing a tuning algorithm meant for the random generation of
words from context-free languages with a given target
letter frequency vector.  However, their algorithm
converges only in an \emph{a priori} unknown vicinity of the target
tuning variable vector.  In practice, it is therefore possible to control no
more than a few tuning parameters at the same time.

In the present paper we propose a novel polynomial-time tuning algorithm based
on convex optimisation techniques, overcoming the previous convergence
difficulties. We demonstrate the effectiveness of our approach with several
examples of rational, algebraic and P\'{o}lya structures. Remarkably, with our
new method, we are easily able to handle large combinatorial systems with
thousands of combinatorial classes and tuning parameters.

In order to illustrate the effectiveness of our approach, we have implemented a
prototype sampler generator {\sf Boltzmann Brain} ({\sf bb} in short).
The source code is available at
Github\footnote{\url{https://github.com/maciej-bendkowski/boltzmann-brain}}.
Supplementary scripts used to generate and visualise the
presented applications of this paper are available as a separate
repository\footnote{\url{https://github.com/maciej-bendkowski/multiparametric-combinatorial-samplers}}.

In \S~\ref{section:sampling:Boltzmann:principles} we briefly recall the
principles of Boltzmann sampling. Next, in \S~\ref{section:tuning} we describe
the tuning procedure. In \S~\ref{section:applications} we propose four
exemplary applications and explain the interface of {\sf bb}. Finally, in the appendix we give the proofs of the
theorems, discuss implementation details and describe a novel exact-size
sampling algorithm for strongly connected rational grammars.

\section{Sampling from Boltzmann principles}
\label{section:sampling:Boltzmann:principles}
\subsection{Specifiable $k$-parametric combinatorial classes.}
Let us consider the neutral class $\mathcal{E}$ and its
atomic counterpart $\mathcal{Z}$, both equipped with a
finite set of admissible operators
(i.e.~disjoint union $+$, Cartesian product $\times$, sequence $\Seq$,
multiset $\MSet$ and cycle $\Cycle$),
see~\cite[24--30]{flajolet09}.
Combinatorial specifications are finite systems of equations (possibly
recursive) built from elementary classes $\mathcal{E}$, $\mathcal{Z}$ and the
admissible operators.
\begin{Example}
Consider the following joint specification for $\mathcal{T}$
and $\mathcal{Q}$. In the combinatorial class \( \mathcal T \) of trees, nodes of even
level (the root starts at level one) have either no or two children  and each node at
odd level has an arbitrary number of non-planarily ordered children:
\begin{equation}
\label{eq:running:univariate}
\begin{cases}
    \mathcal{T}=\mathcal{Z} \MSet(\mathcal{Q})\ , \\
    \mathcal{Q}=\mathcal{Z}+\mathcal{Z}\mathcal{T}^2\, .
\end{cases}
\end{equation}
\end{Example}
In order to distinguish (in other words \emph{mark}) some additional combinatorial
parameters we consider the following natural multivariate extension of
specifiable classes.  \begin{Definition}{(Specifiable $k$\nobreakdash-parametric
    combinatorial classes)} A specifiable $k$\nobreakdash-parametric combinatorial class is
    a combinatorial specification built, in a possibly recursive manner, from
    $k$ distinct atomic classes $\mathcal{Z}_i$ (\( i \in \set{1, \ldots, k}
    \)), the neutral class $\mathcal{E}$ and admissible operators $+, \times,
    \Seq, \MSet$ and $\Cycle$.  In particular, a vector \( \vec{\mathcal C} =
    \seq{\mathcal{C}_1,\ldots,\mathcal{C}_m} \) forms a specifiable
    $k$\nobreakdash-parametric combinatorial class if its specification can be written down
    as
\begin{equation}
\label{eq:specifiable:class}
\begin{cases}
    \mathcal C_1
    =
    \Phi_1(\vec{\CS C}, \CS Z_1, \ldots, \CS Z_k)\, , \\
        \vdots  \\
    \mathcal C_m
    =
    \Phi_m(\vec{\CS C}, \CS Z_1, \ldots, \CS Z_k)
\end{cases}
\end{equation}
    where the right-hand side expressions
    are composed from $\vec{\CS C},\CS Z_1,\ldots,\CS Z_k$, admissible
    operators and the neutral class \( \mathcal{E} \). Moreover,
    we assume that specifiable $k$\nobreakdash-parametric combinatorial specifications
    form well-founded aperiodic systems, see~\cite{species,PiSaSo12,
    drmota1997systems}.
\end{Definition}

\begin{Example}
    \label{example:multivariate}
Let us continue our running example, see~\eqref{eq:running:univariate}.  Note
    that we can introduce two additional \emph{marking} classes \( \mathcal U \) and
    \( \mathcal V \) into the system, of weight zero each, turning it in effect
    to a $k$\nobreakdash-specifiable combinatorial class as follows:
\begin{equation}
\label{eq:running:multivariate}
\begin{cases}
    \mathcal{T}=\CS U \CS Z \MSet(\mathcal{Q}),\\
    \mathcal{Q}=\CS V \CS Z + \mathcal{Z}\mathcal{T}^2\, .
\end{cases}
\end{equation}
In this example, $\mathcal{U}$ is meant to mark the occurrences of nodes at odd
    levels, whereas $\mathcal{V}$ is meant to mark leaves at even levels. In
    effect, we \emph{decorate} the univariate specification with explicit
    information regarding the internal structural patterns of our interest.
\end{Example}

Much like in their univariate variants, $k$\nobreakdash-parametric combinatorial
specifications are naturally linked to ordinary multivariate generating
functions, see e.g~\cite{flajolet09}.

\begin{Definition}{(Multivariate generating functions)}
The multivariate ordinary generating function in variables
$z_1,\ldots,z_k$ associated to a specifiable $k$\nobreakdash-parametric combinatorial
    class $\mathcal{C}$ is defined as
\begin{equation}
    \displaystyle C(z_1,\ldots,z_k)
    =
    \sum_{p_1\geq 0, \ldots, p_k\geq 0}
    c_{\boldsymbol{p}}
        \boldsymbol{z}^{\boldsymbol{p}}
\end{equation}
where $c_{\boldsymbol{p}}=c_{p_1,\ldots,p_k}$ denotes the number of structures
    with $p_i$ atoms of type $\CS Z_i$ and $\boldsymbol{z}^{\boldsymbol{p}}$
    denotes the product $z_1^{p_1} \cdots z_k^{p_k}$. In the sequel, we call
    $\boldsymbol{p}$ the (composition) size of the structure.
\end{Definition}

In this setting, we can easily lift the usual univariate generating function
building rules to the realm of multivariate generating functions associated to
specifiable $k$\nobreakdash-parametric combinatorial classes.
\autoref{table:constructions} summarises these rules.

\renewcommand{\arraystretch}{1.3}
\begin{table*}[tbp]
    \begin{equation*}
\begin{array}{r|l|l|l}
     \text{Class}
     &
     \text{Description}
     &
     C(\boldsymbol{z})
     &
     \Gamma \mathcal{C} ( \boldsymbol{z} )
     \\
\hline
\hline
     \text{Neutral}
     &
     \CS{C} = \{\varepsilon\}
     &
     C(\boldsymbol{z}) = 1
     &
     \varepsilon
     \\
\hline
     \text{Atom}
     &
     \CS{C} = \{\At_i\}
     &
     C(\boldsymbol{z}) = z_i
     &
     \square_i\\
\hline
     \text{Union}
     &
     \CS{C} = \CS{A} + \CS{B}
     &
     A(\boldsymbol{z}) +B(\boldsymbol{z})
     &
     \Bern\big(
       \frac{A(\boldsymbol{z})}{C(\boldsymbol{z})},
       \frac{B(\boldsymbol{z})}{C(\boldsymbol{z})}
     \big)
       \longrightarrow
     \Gamma \mathcal{A}(\boldsymbol{z})
     \;|\;
     \Gamma \mathcal{B}(\boldsymbol{z})
     \\
     \hline
     \text{Product}
     &
     \CS{C} = \CS{A} \times \CS{B}
     &
     A(\boldsymbol{z})
       \times
     B(\boldsymbol{z})
     &
     (
       \Gamma \mathcal{A}(\boldsymbol{z}),
       \Gamma \mathcal{B}(\boldsymbol{z})
     )
     \\
\hline
      \mbox{Sequence}
      &
      \CS{C} = \Seq(\CS{A})
      &
      (1-A(\boldsymbol{z}))^{-1}
      &
      \ell := \text{Geom}(1 - A(\boldsymbol{z})) \longrightarrow
      ( \Gamma \mathcal A(\vec z) )_{\times\ell \text{ times}}
      \\
\hline
      \text{MultiSet}
      &
      \MSet(\mathcal{A})
      &
      \exp\left(
          \sum_{m=1}^\infty
          \tfrac{1}{m}A(\boldsymbol{z}^m)
      \right)
      &
      \text{
          see \autoref{algorithm:mset}, \autoref{section:polya:structres}
      }
      \\
\hline
   \text{Cycle}
   &
   \Cycle(\mathcal{A})
   &
   \sum_{m=1}^\infty \!\!
   \frac{\varphi(m)}{m}
   \ln \frac{1}{1-A(\boldsymbol{z}^m)}
   &
   \text{
       see \autoref{algorithm:cycle}, \autoref{section:polya:structres}
   }
   \\
\end{array}
\end{equation*}
\caption{Multivariate generating functions and their Boltzmann samplers
$\Gamma\mathcal{C}(\boldsymbol{z})$. }\label{table:constructions}
\end{table*}

\subsection{Multiparametric Boltzmann samplers.}
Consider a typical multiparametric Boltzmann sampler workflow~\cite{BodPonty}
on our running example, see~\eqref{eq:running:multivariate}. We start with
choosing target expectation quantities \( (n, k, m) \) of nodes from atomic
classes \( (\mathcal Z, \mathcal U, \mathcal V) \). Next, using a dedicated
tuning procedure we obtain a vector of three real positive numbers \( \vec z =
(z, u, v) \) depending on \( (n,k,m) \). Then, we construct a set of
recursive Boltzmann samplers \( \Gamma \mathcal U(\vec z), \Gamma
\MSet(\mathcal Q(\vec z)) \), etc. according to the building rules
in~\autoref{table:constructions}.  Finally, we use the so constructed samplers
to generate structures with tuned parameters.

In order to sample from either $\CS E$ or atomic classes, we simply construct
the neutral element $\varepsilon$ or an appropriate atomic structure $\Box_i$,
respectively. For union classes we make a Bernoulli choice depending on the
quotients of respective generating functions values and continue with sampling
from the resulting class. In the case of product classes, we spawn two
independent samplers, one for each class, and return a pair of built
structures. Finally, for $\Seq(\CS A)$ we draw a random value from a geometric
distribution with parameter $1 - A(\boldsymbol{z})$ and spawn that many
samplers corresponding to the class $\CS A$.
In other words, \( \mathbb P(\ell \text{ instances}) = A(\vec z)^\ell (1 - A(\vec z)) \).
In the end, we collect the sampler
outcomes and return their list. The more involved $\MSet$ and $\Cycle$
constructions are detailed in~\autoref{section:polya:structres}.

The probability space associated to so constructed Boltzmann samplers takes then
the following form.  Let $\boldsymbol{z} \in (\mathbb{R}^+)^k$ be a vector
inside the ball of convergence of $C(\boldsymbol{z})$ and $\omega$ be a
structure of composition size $\boldsymbol{p}$ in a $k$\nobreakdash-parametric class
$\mathcal{C}$. Then, the probability that $\omega$ becomes the output of a
multiparametric Boltzmann sampler \( \Gamma \mathcal C(\vec z)\) is given as
\begin{equation}
\label{eq:WeightedBoltzmannDistribution}
    \mathbb{P}_{\boldsymbol{z}}(\omega)
    =
    \frac
        { {\vec z}^{\boldsymbol{p}} }
        { C ( \boldsymbol{z} ) }
    \enspace .
\end{equation}

\begin{proposition}
\label{proposition:expected:value}
Let \( \vec N = (N_1, \ldots, N_k) \) be the random vector where \( N_i \)
equals the number of atoms of type
    $\CS Z_i$ in a random combinatorial structure returned by the
    $k$\nobreakdash-parametric Boltzmann sampler $\Gamma\mathcal{C}(\vec z)$.
    Then, the expectation vector \( \mathbb E_{\vec z} (\vec N) \)
    and
 the covariance matrix
    $\Cov_{\boldsymbol{z}}(\boldsymbol{N})$ are given by
\begin{equation}
  \label{eq:expectations}
    \mathbb{E}_{\boldsymbol{z}}(N_i)
    =
    \left.
    \dfrac
        {\partial}
        {\partial \xi_i}
    \log C(e^{\vec \xi})
    \right|_{\vec \xi = \log \vec z}
    \quad \text{and} \quad
    \mathrm{Cov}_{\boldsymbol{z}}(\boldsymbol{N})
    \nonumber =
    \left.
    \left[
        \dfrac
            {\partial^2}
            {\partial \xi_i \partial \xi_j}
        \log C(e^{\vec \xi})
    \right]_{i,j = 1}^k
    \right|_{\vec \xi = \log \vec z}
    \enspace .
\end{equation}
Hereafter, we use \( e^{\boldsymbol z} \) to denote coordinatewise exponentiation.
\end{proposition}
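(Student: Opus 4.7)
The plan is to compute both quantities directly from the Boltzmann distribution in \eqref{eq:WeightedBoltzmannDistribution} and then translate the resulting $\vec z$-derivatives into $\vec\xi$-derivatives via the substitution $\vec z = e^{\vec\xi}$. The whole argument is a formal calculation; the only mild subtlety is keeping track of the chain rule under the logarithmic change of variables.

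First I would write out the definition of the expectation explicitly. Summing over all structures $\omega \in \mathcal C$, grouping by composition size $\vec p$, and using the fact that there are $c_{\vec p}$ such structures, I get
\begin{equation*}
\mathbb E_{\vec z}(N_i) \;=\; \sum_{\omega} p_i(\omega)\, \frac{\vec z^{\vec p(\omega)}}{C(\vec z)} \;=\; \frac{1}{C(\vec z)} \sum_{\vec p} p_i\, c_{\vec p}\, \vec z^{\vec p} \;=\; \frac{z_i}{C(\vec z)} \frac{\partial C}{\partial z_i}(\vec z).
\end{equation*}
A convergence remark is needed here: since $\vec z$ lies inside the ball of convergence of $C$, the series for $C$, its partial derivatives, and all subsequent termwise manipulations are absolutely convergent, so differentiation under the sum is legal.

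Next I would invoke the change of variables $\vec z = e^{\vec\xi}$, under which $\vec z^{\vec p} = e^{\langle \vec p, \vec\xi\rangle}$ and therefore $\partial/\partial\xi_i$ acts on $\vec z^{\vec p}$ by multiplication by $p_i$. The chain rule gives $\partial_{\xi_i} \log C(e^{\vec\xi}) = z_i\, \partial_{z_i} \log C(\vec z)$, which matches the expression above and yields the expectation formula. For the covariance, the same termwise argument produces
\begin{equation*}
\frac{\partial^2}{\partial\xi_i\partial\xi_j} C(e^{\vec\xi}) \;=\; \sum_{\vec p} p_i p_j\, c_{\vec p}\, e^{\langle \vec p,\vec\xi\rangle} \;=\; C(e^{\vec\xi})\, \mathbb E_{\vec z}(N_i N_j),
\end{equation*}
after dividing by $C(e^{\vec\xi})$.

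Finally, to obtain the covariance entry I would differentiate $\log C$ twice, using the identity $\partial_{\xi_j}\partial_{\xi_i} \log F = F^{-1}\partial_{\xi_i}\partial_{\xi_j} F - F^{-2}(\partial_{\xi_i} F)(\partial_{\xi_j} F)$ with $F = C(e^{\vec\xi})$. Substituting the expressions computed above, the first term equals $\mathbb E_{\vec z}(N_i N_j)$ while the second equals $\mathbb E_{\vec z}(N_i)\mathbb E_{\vec z}(N_j)$, and their difference is precisely $\mathrm{Cov}_{\vec z}(N_i, N_j)$. There is no real obstacle: the only point requiring care is justifying termwise differentiation, which follows from analyticity of $C$ on its domain of convergence.
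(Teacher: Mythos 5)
Your computation is correct: the direct evaluation $\mathbb{E}_{\vec z}(N_i) = z_i\,\partial_{z_i}C(\vec z)/C(\vec z)$, the identity $\partial_{\xi_i}\partial_{\xi_j}C(e^{\vec\xi}) = C(e^{\vec\xi})\,\mathbb{E}_{\vec z}(N_iN_j)$, and the quotient-rule expansion of the second log-derivative together give exactly the stated expectation and covariance formulas, with termwise differentiation justified by analyticity strictly inside the domain of convergence. The paper states this proposition without proof, treating it as the classical fact about (multiparametric) Boltzmann distributions; your argument is precisely the standard derivation it implicitly relies on, so there is nothing to add.
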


\begin{corollary}
\label{corrolary:convexity}
The function \( \gamma(\vec z) := \log C(e^{\vec z}) \) is convex because its
    matrix of second derivatives, as a covariance matrix, is positive
    semi-definite inside the set of convergence.  This crucial assertion will
    later prove central to the design of our tuning algorithm.
\end{corollary}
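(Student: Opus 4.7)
The plan is to deduce this directly from Proposition~\ref{proposition:expected:value} together with the elementary fact that covariance matrices are positive semi-definite. First, I would rewrite the proposition after the change of variables $\vec\xi \leftarrow \vec z$ (so that the $\vec z$ of the proposition becomes $e^{\vec z}$): this identifies the Hessian matrix $\bigl[\partial^2\gamma / \partial z_i \partial z_j\bigr]$ entry by entry with the covariance matrix $\Cov_{e^{\vec z}}(\vec N)$ of the atom-count vector $\vec N$ under the Boltzmann distribution at the parameter point $e^{\vec z}$.

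Next, I would invoke the standard fact that for any real vector $\vec t$, the quadratic form $\vec t^{\top} \Cov(\vec N)\, \vec t = \mathrm{Var}(\langle \vec t, \vec N\rangle)$ is non-negative, so the Hessian of $\gamma$ is positive semi-definite at every point of its domain. Combined with the second-order characterisation of convexity on a convex open set, this already yields the claim.

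The only genuine subtlety is to check that the natural domain of $\gamma$ is itself convex, since a PSD Hessian on a non-convex set does not suffice. For this I would observe that $C(e^{\vec z}) = \sum_{\vec p} c_{\vec p}\, e^{\langle \vec p, \vec z\rangle}$ is a non-negative sum of log-affine functions of $\vec z$; each summand is log-convex, log-convexity is preserved under pointwise sums (an immediate consequence of Hölder's inequality), and therefore the set on which the series is finite is convex. Beyond this domain verification, no real obstacle arises: the corollary is essentially an unpacking of the covariance formula supplied by the preceding proposition, and its importance lies not in the depth of the proof but in the consequence it enables, namely that the tuning problem of \S~\ref{section:tuning} can be cast as a convex optimisation problem.
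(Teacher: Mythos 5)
Your argument is essentially the paper's own: the statement is justified there exactly by identifying the Hessian of $\gamma$ with $\Cov_{\vec z}(\vec N)$ via Proposition~\ref{proposition:expected:value} and invoking positive semi-definiteness of covariance matrices, which is what you do. Your extra verification that the domain is convex (via H\"older, i.e.\ log-convexity of $\sum_{\vec p} c_{\vec p} e^{\langle \vec p, \vec z\rangle}$) is a point the paper leaves implicit and in fact already yields convexity of $\gamma$ on its own, so it is a welcome but not divergent refinement.
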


\begin{Remark}
    Uniparametric recursive samplers of Nijenhuis and Wilf take, as well as
    Boltzmann samplers, a system of generating functions as their
    input.
    This system can be modified by
    putting fixed values of tuning variables,
    in effect altering the corresponding branching probabilities.
    The resulting distribution of the random variable corresponding to a
    weighted recursive sampler coincides with the distribution of the
    Boltzmann-generated variable conditioned on the structure size.  As a
    corollary, the tuning procedure that we discuss in the following section is
    also valid for the exact-size approximate-frequency recursive sampling.
    In~\autoref{section:rational:grammars} we
    describe an algorithm for rational specifications which samples
    objects of size \( n + O(1) \). As a by-product, we show how to
    convert approximate-size samplers corresponding to rational systems into
    exact-size samplers.
\end{Remark}

\section{Tuning as a convex optimisation problem} \label{section:tuning}
We start with a general result about converting the problem of tuning arbitrary
specifiable $k$\nobreakdash-parametric combinatorial specifications into a convex
optimisation problem, provided that one has access to an oracle yielding values
and derivatives of corresponding generating functions.  We note that this
general technique can be applied to differential specifications as well.  We
write \( f(\cdot) \to \min_{\boldsymbol z} \), \( f(\cdot) \to
\max_{\boldsymbol z} \) to denote the minimisation (maximisation, respectively)
problem of the target function \( f(\cdot) \) with respect to the vector
variable \( \vec z \).  All proofs are postponed
until~\autoref{section:convex:proofs}.  Throughout this section, we assume that
given tuning expectations are \emph{admissible} in the sense that there always
exists a target vector \( \vec z^\ast \) corresponding
to~\eqref{eq:expectations}. Furthermore, we assume that the combinatorial
system is \emph{well-founded} and \emph{strongly connected}. Some non-strongly
connected cases fall into the scope of our framework as well, but for the core
proof ideas we concentrate only on strongly connected systems.

\begin{theorem}
    \label{theorem:general}
Consider a multiparametric combinatorial class \( \mathcal C \).
Fix the expectations \( \mathbb E_{\vec z} \vec N = \vec \nu \),
see~Proposition~\ref{proposition:expected:value}.
    Let
\( C(\vec z) \) be the generating function corresponding to \( \mathcal C \).
    Then, the tuning vector \( \vec z \), see~\eqref{eq:expectations}, is
    equal to \( e^{\vec \xi} \) where \( \vec \xi \)
comes from the following minimisation problem:
\begin{equation}
    \log C(e^{\boldsymbol \xi}) - \boldsymbol\nu^\top \boldsymbol
\xi \to \min_{\boldsymbol \xi}
    \enspace .
\end{equation}
\end{theorem}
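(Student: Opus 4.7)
My plan is to recognise the stated minimisation problem as the first-order optimality condition for tuning, and then lean on the convexity already established in Corollary~\ref{corrolary:convexity} to promote a critical point to a global minimiser.

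First, set $f(\vec\xi) := \log C(e^{\vec\xi}) - \vec\nu^\top \vec\xi$. The first summand is exactly the function $\gamma(\vec\xi)$ of Corollary~\ref{corrolary:convexity}, hence convex on its domain (the logarithmic image of the convergence polydisc of $C$); the second summand is linear, so $f$ is convex as well. Next, compute the gradient componentwise:
\begin{equation*}
\frac{\partial f}{\partial \xi_i}(\vec\xi)
=
\frac{\partial}{\partial \xi_i}\log C(e^{\vec\xi}) - \nu_i.
\end{equation*}
Applying Proposition~\ref{proposition:expected:value} with the identification $\vec z = e^{\vec\xi}$, the right-hand side equals $\mathbb{E}_{\vec z}(N_i) - \nu_i$. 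Therefore $\nabla f(\vec\xi) = \vec 0$ is precisely the admissibility equation $\mathbb{E}_{\vec z}\vec N = \vec\nu$ under the bijective change of variables $\vec z \leftrightarrow \vec\xi = \log \vec z$. Since $f$ is convex, any stationary point is automatically a global minimiser, and conversely any minimiser in the interior satisfies $\nabla f = \vec 0$. This gives the desired equivalence between the tuning condition and the optimisation problem.

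The remaining work is to show that the minimum is indeed attained in the interior of the domain (rather than escaping to the boundary of convergence or to infinity) and to address uniqueness. For existence I would use the admissibility hypothesis: the assumption that some $\vec z^\ast$ realises the expectations $\vec\nu$ provides a stationary point of $f$, which by convexity is a global minimiser. For uniqueness I would invoke strong connectedness of the system together with the aperiodicity/well-foundedness assumption; these are exactly the conditions (cf.~\cite{drmota1997systems,PiSaSo12}) that force the Hessian of $\gamma$, i.e.\ the covariance matrix $\Cov_{\vec z}(\vec N)$, to be strictly positive definite — no nontrivial linear combination of the $N_i$ is almost surely constant — so $f$ is strictly convex and the minimiser is unique.

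I expect the main obstacle to be this last point: carefully arguing that strong connectedness rules out degeneracy of $\Cov_{\vec z}(\vec N)$ on the interior of the convergence domain, and that the infimum is not approached only as $\vec\xi$ tends to the boundary. The gradient computation and the convexity argument are immediate once Proposition~\ref{proposition:expected:value} and Corollary~\ref{corrolary:convexity} are in hand; the substantive analytic content lies in ensuring that admissibility of $\vec\nu$ guarantees an interior minimiser, which is where the structural assumptions on the specification are essential and where the detailed proof in~\autoref{section:convex:proofs} should do the heavy lifting.
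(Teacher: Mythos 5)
Your proposal matches the paper's own argument: the paper likewise observes that the tuning condition $\nabla_{\vec\xi}\log C(e^{\vec\xi})=\vec\nu$ is exactly the vanishing-gradient condition for $\log C(e^{\vec\xi})-\vec\nu^\top\vec\xi$, and uses convexity (sum of a convex and a linear function, via Corollary~\ref{corrolary:convexity}) to equate stationarity with minimisation. Your additional remarks on interior attainment and uniqueness go slightly beyond the paper's proof, which simply relies on the standing admissibility assumption, but the core route is the same.
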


Let us turn to the specific classes of algebraic and rational specification. In
those cases, no differential-equation type systems are allowed;
however, it is possible to reformulate the problem so that no extra oracles
are required.

\begin{theorem}
\label{theorem:algebraic:tuning}
Let \( \vec{\mathcal C} = \vec \Phi(\vec{\mathcal C}, \vec{\mathcal Z}) \)
    be a multiparametric algebraic system with \( \vec{\CS C} =
    \seq{\CS C_1,\ldots,\CS C_m} \).
Fix the expectations \( N_i \) of the parameters of objects sampled from
\( \mathcal C_1 \) to
\( \mathbb E_{\vec z} \boldsymbol N = \boldsymbol \nu \).
Then, the tuning vector \( \boldsymbol z \) is equal to \( e^{\vec \xi} \) where \( \vec \xi \) comes from
the convex problem:
\begin{equation}
    \label{eq:optimisation:algebraic}
\begin{cases}
    c_1 - \boldsymbol \nu^\top \boldsymbol \xi \to \min_{\vec \xi, \vec c}
\enspace , \\
    \log \vec \Phi(e^{\vec c}, e^{\vec \xi}) - \vec c \leq 0 .
\end{cases}
\end{equation}
Hereafter, ``\( \leq \)'' and \( \log \vec \Phi
\) denote a set of inequalities and the coordinatewise
    logarithm, respectively.
\end{theorem}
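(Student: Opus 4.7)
The approach is to reduce the claim to Theorem~\ref{theorem:general} applied to \( \mathcal{C}_1 \), by introducing auxiliary variables \( \vec c = (c_1,\ldots,c_m) \) intended to represent \( c_i = \log C_i(e^{\vec \xi}) \). The program then minimises the linear functional \( c_1 - \vec \nu^\top \vec \xi \) subject to the relaxed inequalities \( \log \Phi_i(e^{\vec c}, e^{\vec \xi}) \leq c_i \) for \( i = 1,\ldots,m \). I would proceed in three steps: (i)~establish convexity of the program; (ii)~show that at any minimiser all of these constraints are saturated, so that \( \vec c \) is in fact the vector of log-generating-function values; (iii)~invoke Theorem~\ref{theorem:general} to identify the minimiser with the tuning vector.

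For step (i), observe that \( \vec \Phi \) is a vector of polynomials with \emph{nonnegative} coefficients in the variables \( \mathcal{C}_j, \mathcal{Z}_i \). Hence \( \Phi_i(e^{\vec c}, e^{\vec \xi}) \) is a sum of monomials of the form \( \lambda_\alpha \exp\langle \alpha, (\vec c, \vec \xi) \rangle \) with \( \lambda_\alpha \geq 0 \), so \( \log \Phi_i(e^{\vec c}, e^{\vec \xi}) \) is a log-sum-exp of affine functions, which is a standard convex function. The feasible set, an intersection of convex sublevel sets, is therefore convex, and together with the linear objective this yields a geometric program in standard form.

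For step (ii), I would exploit the monotonicity of the operator \( \Psi(\vec c) := \log \vec \Phi(e^{\vec c}, e^{\vec \xi}) \), which is coordinatewise non-decreasing because \( \vec \Phi \) has nonnegative coefficients. Well-foundedness gives a unique fixed point \( \vec c^\ast(\vec \xi) = \log \vec C(e^{\vec \xi}) \), and any super-solution \( \vec c \geq \Psi(\vec c) \) dominates this fixed point: iterating \( \Psi \) from such a \( \vec c \) produces a coordinatewise decreasing sequence converging to \( \vec c^\ast(\vec \xi) \). Consequently, with \( \vec \xi \) held fixed, the minimum of \( c_1 \) over the feasible set is attained precisely at \( \vec c = \log \vec C(e^{\vec \xi}) \); in particular \( c_1^\ast = \log C_1(e^{\vec \xi}) \). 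Substituting into the objective reduces the outer problem to \( \log C_1(e^{\vec \xi}) - \vec \nu^\top \vec \xi \to \min_{\vec \xi} \), which is exactly Theorem~\ref{theorem:general} for \( \mathcal{C}_1 \) and pinpoints the required tuning vector. The delicate point is step (ii): justifying that minimising only \( c_1 \) drives \emph{all} coordinates of \( \vec c \) down to the fixed point, which is where strong connectivity is needed, since every \( c_j \) must ultimately influence \( \Phi_1 \) through some chain of dependencies in the system.
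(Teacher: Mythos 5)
Your proposal is correct in substance, but it follows a genuinely different route from the paper's. The paper fixes a candidate optimum and argues by contradiction that \emph{all} constraints of~\eqref{eq:optimisation:algebraic} must be saturated at any optimal point: if $c_1$ has slack it can be decreased directly, and if some other $c_k$ has slack, strong connectivity provides a dependency path $c_1\to\cdots\to c_k$ along which the $c_i$ can be decreased in turn, lowering the objective; the problem then collapses onto the manifold $\vec c=\log\vec\Phi(e^{\vec c},e^{\vec\xi})$, i.e.\ $\vec C=\vec\Phi(\vec C,\vec z)$, and Theorem~\ref{theorem:general} applies. You instead perform a partial minimisation: for fixed $\vec\xi$ you use monotonicity of $\Psi(\vec c)=\log\vec\Phi(e^{\vec c},e^{\vec\xi})$ to show every feasible $\vec c$ dominates the vector of log-generating-function values, so $\min_{\vec c}c_1=\log C_1(e^{\vec\xi})$, and the outer problem in $\vec\xi$ is exactly Theorem~\ref{theorem:general}. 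This buys something the paper's argument does not: you never need strong connectivity (contrary to your closing remark --- at a minimiser only $c_1$ must be tight; coordinates that do not feed into $\mathcal C_1$ may keep slack without affecting the conclusion), whereas the paper's path-decreasing step is precisely where its strong-connectivity hypothesis enters.

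Two imprecisions in your step (ii) should be repaired. First, well-foundedness does \emph{not} give a unique fixed point: algebraic systems generally have several solution branches (e.g.\ $B=z+zB^2$ has two positive solutions for $z$ below the singularity), and the generating function is the \emph{least} nonnegative one. Second, the downward iteration $\Psi^n(\vec c)$ from a super-solution need not converge to that combinatorial fixed point --- started exactly at the larger branch it stays there. The clean fix is the upward Kleene argument: with $\vec Y=e^{\vec c}\ge\vec\Phi(\vec Y,\vec z)$ and the iterates $\vec y^{(0)}=\vec 0$, $\vec y^{(n+1)}=\vec\Phi(\vec y^{(n)},\vec z)$, monotonicity gives $\vec Y\ge\vec y^{(n)}$ for all $n$, and $\vec y^{(n)}\to\vec C(\vec z)$ by well-foundedness, so every feasible point dominates $\log\vec C(e^{\vec\xi})$ coordinatewise and the partial minimum $c_1^\ast=\log C_1(e^{\vec\xi})$ is attained at the feasible point $\vec c=\log\vec C(e^{\vec\xi})$. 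With that adjustment your argument is complete.
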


Let us note that the above theorem naturally extends to the case of labelled
structures with \( \Set \) and \( \Cycle \) operators. For unlabelled P\'{o}lya
operators like \( \MSet \) or \(\Cycle \), we have to truncate the
specification to bound the number of substitutions.
In consequence, it becomes possible to sample corresponding unlabelled
structures, including partitions, functional graphs, series-parallel circuits,
etc.

 Singular Boltzmann samplers (also defined in~\cite{DuFlLoSc}) are the limiting
variant of ordinary Boltzmann samplers with an infinite expected size of
generated structures.  In their multivariate version, samplers are considered
\emph{singular} if their corresponding variable vectors belong to the boundary
of the respective convergence sets.
\begin{theorem}
\label{theorem:singular:tuning}
    Let \( \vec{\mathcal C} = \vec \Phi(\vec{\mathcal C}, \mathcal{Z}, \vec{\mathcal U}) \)
    be a multiparametric algebraic system with \( \vec{\CS C} =
    \seq{\CS C_1,\ldots,\CS C_m} \), the atomic class \(\mathcal{Z}\) marking
    the corresponding structure size and \( \vec{\CS U} =
    \seq{\CS U_1,\ldots,\CS U_k} \) being a vector (possibly empty) of distinguished atoms.
Assume that the target expected frequencies of the atoms \( \mathcal U_i \) are
    given by the vector
\( \boldsymbol \alpha \). Then, the variables \( (z, \vec u) \) that deliver
    the tuning of the corresponding singular Boltzmann sampler are the result
    of the following convex optimisation problem,
where \( z = e^{\xi} \), \( \vec u = e^{\vec \eta} \):
\begin{equation}
\begin{cases}
\xi + \vec \alpha^\top \vec \eta \to \max_{\xi, \vec \eta, \vec c} \enspace , \\
    \log \vec \Phi(e^{\vec c}, e^{\xi}, e^{\vec \eta}) - \vec c \leq 0\, .
\end{cases}
\end{equation}
\end{theorem}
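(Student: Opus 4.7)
The plan is to adapt the KKT-based analysis from the preceding algebraic tuning theorem, modifying it to handle the singular regime in which the size expectation is infinite but the relative atom frequencies $\alpha_k$ remain well-defined. First, I would verify that the proposed problem is genuinely convex: after the change of variables $\vec{C}=e^{\vec{c}}$, $z=e^{\xi}$, $\vec{u}=e^{\vec{\eta}}$, each $\Phi_i$ becomes a posynomial in the exponentials, so $(\vec{c},\xi,\vec{\eta})\mapsto\log\Phi_i(e^{\vec{c}},e^{\xi},e^{\vec{\eta}})$ is a log-sum-exp and therefore convex. Subtracting the linear term $c_i$ preserves convexity, and the objective $\xi+\vec{\alpha}^\top\vec{\eta}$ is linear.

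Next, I would identify the feasible set with the closure of the domain of convergence. Whenever $\vec{\Phi}(\vec{C},z,\vec{u})\leq\vec{C}$ componentwise, a Knaster--Tarski monotone-iteration argument starting from $\vec{0}$ produces a least fixed point dominated by $\vec{C}$, so $(z,\vec{u})$ lies inside the convergence domain; conversely, on the singular boundary of a strongly connected aperiodic system the generating function values saturate the inequality to equality. A contradiction argument then shows that the constraint must be active at the optimum: otherwise $\xi$ could be increased by a small positive amount while retaining feasibility (by continuity and monotonicity of $\vec{\Phi}$ in $z$), strictly improving the linear objective. In particular, the optimum records both the singular parameters $(z^{\ast},\vec{u}^{\ast})$ and the corresponding generating function values $\vec{C}^{\ast}=\vec{C}(z^{\ast},\vec{u}^{\ast})$.

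Finally, I would derive and interpret the KKT stationarity conditions. With multipliers $\vec{\lambda}\geq 0$, stationarity in $\vec{c}$ rewrites, after setting $\mu_i=\lambda_i/C_i^{\ast}$, as $\vec{\mu}^\top(\partial\vec{\Phi}/\partial\vec{C})=\vec{\mu}^\top$ evaluated at the singular fixed point, so $\vec{\mu}$ is the positive left Perron--Frobenius eigenvector of the Jacobian with eigenvalue one -- the very vector that governs the square-root singular behaviour of a strongly connected aperiodic algebraic system. Stationarity in $\xi$ and in each $\eta_k$ then yields $\sum_i\mu_i\, z\,\partial_z\Phi_i=1$ and $\sum_i\mu_i\, u_k\,\partial_{u_k}\Phi_i=\alpha_k$, whose ratio is precisely the Drmota--Lalley--Woods formula for the singular frequency $\alpha_k=\lim\mathbb{E}_{\vec{z}}[U_k]/\mathbb{E}_{\vec{z}}[N]$ attained by the singular Boltzmann sampler. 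The hard part will be exactly this last identification: one must unwind the normalisation implicit in the Lagrange multipliers and carefully invoke the standard singular analysis of strongly connected aperiodic algebraic systems so as to match the KKT ratios with the combinatorial frequency formulas, whereas the convexity and feasibility steps follow the posynomial and monotone-iteration templates already used in the preceding algebraic tuning theorem.
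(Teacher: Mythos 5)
Your proposal is correct in outline but follows a genuinely different route from the paper. The paper's proof shares your first step (arguing that at the optimum all constraints must be active, via monotonicity of the right-hand sides), but then avoids Lagrange multipliers entirely: fixing \( \vec u = e^{\vec\eta} \), it invokes the Drmota--Lalley--Woods expansion \( C(z,\vec u) \sim a_0(\vec u) - b_0(\vec u)(1 - z/\rho(\vec u))^{t} \), extracts \( [z^n] \) coefficients of \( \nabla_{\vec u} C \) and \( C \) to show that tuning the frequencies to \( \vec\alpha \) amounts, as \( n\to\infty \), to \( \mathrm{diag}(\vec u)\,\nabla_{\vec u}\rho(\vec u)/\rho(\vec u) = -\vec\alpha \), i.e. \( \nabla_{\vec\eta}\bigl(\xi(\vec\eta) + \vec\alpha^\top\vec\eta\bigr) = 0 \) where \( \xi(\vec\eta) = \log\rho(e^{\vec\eta}) \) is exactly the boundary of the feasible set; hence maximising \( \xi + \vec\alpha^\top\vec\eta \) over that set is the tuning problem. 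Your KKT route instead characterises the optimum algebraically: stationarity in \( \vec c \) makes the rescaled multipliers a positive left Perron eigenvector of \( \partial\vec\Phi/\partial\vec C \) with eigenvalue one (which, by strong connectivity and Perron--Frobenius, pins the point to the singular variety), and stationarity in \( \xi,\vec\eta \) gives \( \sum_i\mu_i z\,\partial_z\Phi_i = 1 \) and \( \sum_i\mu_i u_k\,\partial_{u_k}\Phi_i = \alpha_k \); implicit differentiation of \( \vec\tau = \vec\Phi(\vec\tau,\rho(\vec u),\vec u) \) along the singular variety, contracted with the left eigenvector, turns these into \( -u_k\,\partial_{u_k}\rho/\rho = \alpha_k \), matching the paper's condition. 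What your approach buys is a cleaner optimisation-theoretic picture (dual variables identified with the Perron eigenvector, and your Knaster--Tarski identification of feasibility with the convergence domain makes explicit something the paper leaves implicit); what it costs is that you must check a constraint qualification (Slater holds since well-founded systems admit strictly feasible points) and you still cannot bypass the singularity analysis: the identification \( \alpha_k = -u_k\partial_{u_k}\rho/\rho \) as the limiting frequency of the singular sampler is precisely the coefficient-asymptotics computation the paper carries out, so that analytic step remains, as you anticipate, the crux rather than a routine unwinding.
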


Finally, let us note that all of the above outlined convex programs can be
effectively optimised using the polynomial-time interior-point method
optimisation procedure of Nesterov and Nemirovskii \cite{nesterov1994interior}.
The required precision \( \varepsilon \) is typically \( Poly(n) \),
see~\autoref{section:convex:proofs}.

\begin{theorem}
\label{theorem:main}
    For multiparametric combinatorial systems with description length \( L \),
    the tuning problem can be solved with precision \(
    \varepsilon \) in time \( O\big( L^{3.5}
    \log\frac{1}{\varepsilon}\big) \).
\end{theorem}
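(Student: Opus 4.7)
The plan is to combine the convex reformulations from Theorems~\ref{theorem:general}--\ref{theorem:singular:tuning} with the polynomial-time interior-point machinery of Nesterov and Nemirovskii~\cite{nesterov1994interior}. After applying the appropriate one of those theorems to the given specification, the tuning problem reduces to minimising a linear functional in the variables \((\vec \xi, \vec c)\) subject to inequality constraints of the form \( \log \Phi_j(e^{\vec c}, e^{\vec \xi}) - c_j \le 0 \). Because each atom and each admissible operator in the right-hand sides contributes only a bounded number of auxiliary variables and inequalities, the total size of the resulting convex program is \(O(L)\) in both variables and constraints, where \(L\) is the description length of the input. For unlabelled P\'olya operators such as \(\MSet\) and \(\Cycle\), the infinite series are first truncated to \(O(L + \log(1/\varepsilon))\) terms, which is sufficient to meet the target accuracy.

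The second step is to equip the feasible region with a self-concordant barrier of modest complexity. Each constraint \( \log \Phi_j(e^{\vec c}, e^{\vec \xi}) \le c_j \) is of log-sum-exp type, and the standard exponential-cone construction of Nesterov--Nemirovskii endows each such constraint with a self-concordant barrier of parameter \(O(1)\). Summing over all \(O(L)\) constraints yields a self-concordant barrier for the entire feasible set with complexity parameter \(\nu = O(L)\).

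With this barrier in place, the short-step path-following method reaches the target precision \(\varepsilon\) in \( O(\sqrt{\nu}\, \log(1/\varepsilon)) = O(\sqrt{L}\, \log(1/\varepsilon)) \) Newton iterations. Each iteration assembles and factorises a Hessian linear system of dimension \(O(L) \times O(L)\), costing \(O(L^3)\) arithmetic operations with standard dense linear algebra. Multiplying the two contributions yields the advertised bound \( O(L^{3.5} \log(1/\varepsilon)) \).

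The main obstacle I anticipate is the uniform verification of self-concordance constants across the various operators (\(\Seq, \MSet, \Cycle\)) and, in the P\'olya case, across the truncated sums \( \sum_m \tfrac{1}{m} A(\vec z^{\,m}) \), together with a control of the geometry of the feasible set so that Newton iterates stay away from the boundary of convergence of \( C(e^{\vec \xi}) \). A secondary issue is to quantify how the numerical precision \(\varepsilon\) of the primal solution translates into the accuracy of the recovered tuning vector \(\vec z^\ast\) of Proposition~\ref{proposition:expected:value}: the admissibility hypothesis together with the strict convexity of \(\gamma(\vec z) = \log C(e^{\vec z})\) (Corollary~\ref{corrolary:convexity}) on strongly connected well-founded systems should yield a polynomial relation, as argued in detail in \S\ref{section:convex:proofs}.
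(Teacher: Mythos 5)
Your proposal is correct and follows essentially the same route as the paper: reformulate tuning as a convex program with $O(L)$ log-sum-exp constraints (truncating the P\'olya sums), equip the feasible set with a self-concordant barrier of parameter $\vartheta = O(L)$, and run Nesterov--Nemirovskii path-following, giving $O(\sqrt{L}\log(1/\varepsilon))$ Newton steps at $O(L^3)$ arithmetic cost each, hence $O(L^{3.5}\log(1/\varepsilon))$. The only cosmetic difference is that the paper obtains the barrier and the initial feasible point through the Disciplined Convex Programming framework of Grant, Boyd and Ye rather than by an explicit exponential-cone construction, and it likewise notes the sparse-matrix improvement to $O(L^{2.5}\log(1/\varepsilon))$.
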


Let us complete this section by constructing an optimisation system
for~\eqref{example:multivariate}.  Let \( (n,k,m) \) be the target expectation
quantities of \( (\CS Z, \CS U, \CS V)\).
By the rules in~\autoref{table:constructions},
the system of
functional equations and its log-exp transformed optimisation counterpart take
the form
\begin{equation}
    \begin{cases}
        T(z, u, v) =
            u z \exp \left(
                \displaystyle\sum_{i=1}^{\infty} \dfrac{Q(z^i,u^i,v^i)}{i}
            \right) ,\\
        Q(z, u, v) =
            v z + z T(z, u, v)^2
            \, .
    \end{cases}
\end{equation}
Setting \( T(z^i,u^i,v^i)=e^{\tau_i} \), \( Q(z^i,u^i,v^i) =e^{\kappa_i} \),
\( z = e^{\zeta} \), \( u = e^{\eta} \), \( v = e^{\phi} \), we obtain
\begin{equation}
    \begin{cases}
         \tau_1 - n\zeta - k\eta - m\phi \to \min, \\
         \tau_j \geq \eta j + \zeta j + \displaystyle\sum_{i=1}^\infty
         \dfrac{e^{\kappa_{i j}}}{i}
         , \quad j \in  \{ 1, 2, \ldots \}\\
         \kappa_j \geq \log(e^{\phi j + \zeta j} + e^{\zeta j + 2 \tau_j})
         , \quad j \in  \{ 1, 2, \ldots \}
         \enspace .
    \end{cases}
\end{equation}
For practical purposes, the sum can be truncated with little effect on
distribution.

\section{Applications}
\label{section:applications}
In this section we present several examples illustrating the wide range of
applications of our tuning techniques. Afterwards, we briefly discuss our
prototype sampler generator and its implementation details.

\subsection{Polyomino tilings.}
We start with a benchmark example of a rational specification defining ${n
\times 7}$ rectangular tilings using up to $126$ different tile variants (a toy
example of so-called transfer matrix models, cf.~\cite[Chapter V.6, Transfer
matrix models]{flajolet09}).

\begin{figure}[hbt!]
    \begin{center}
        \includegraphics[height=0.02\textheight]{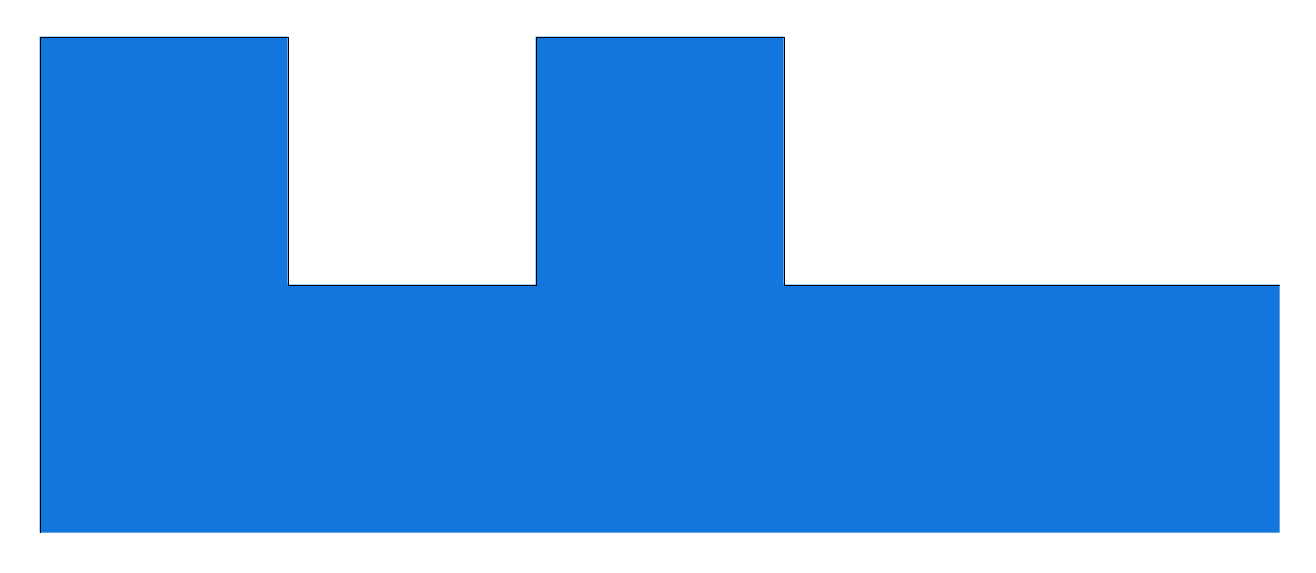} $\ $
        \includegraphics[height=0.02\textheight]{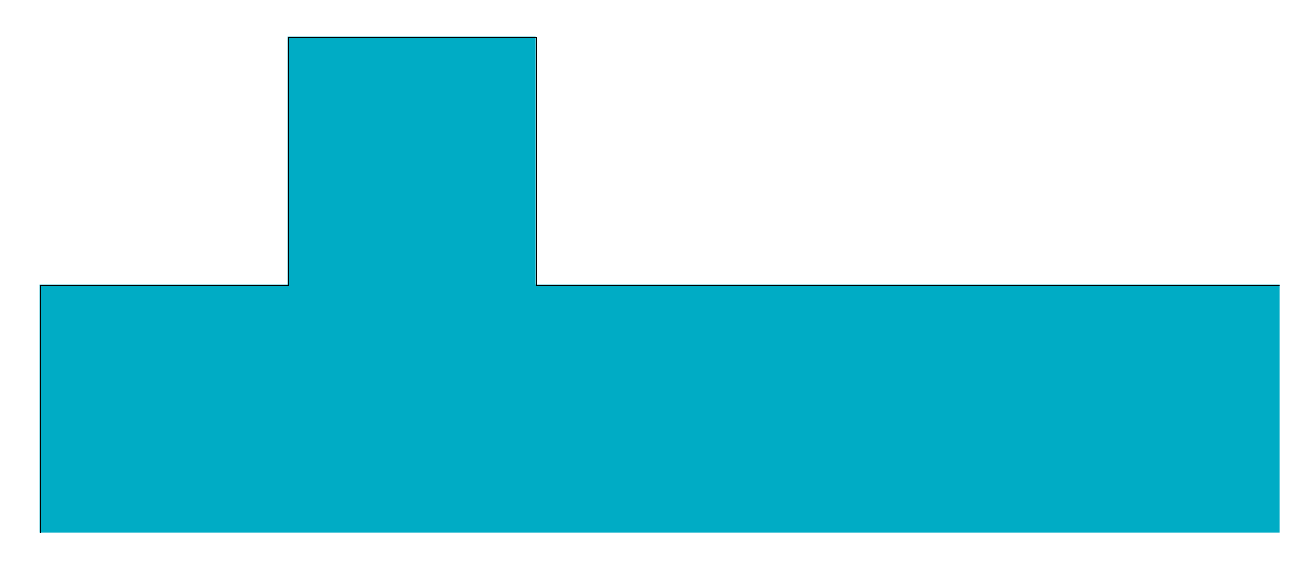} $\ $
        \includegraphics[height=0.02\textheight]{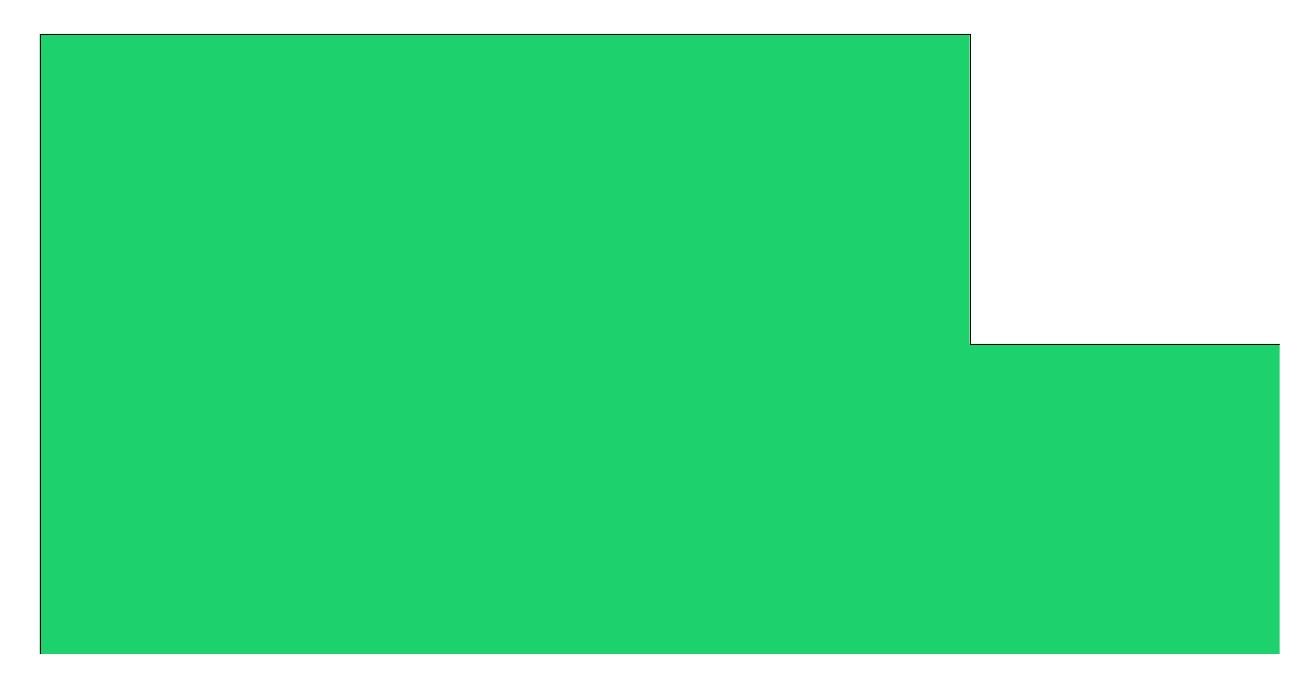} $\ $
        \includegraphics[height=0.02\textheight]{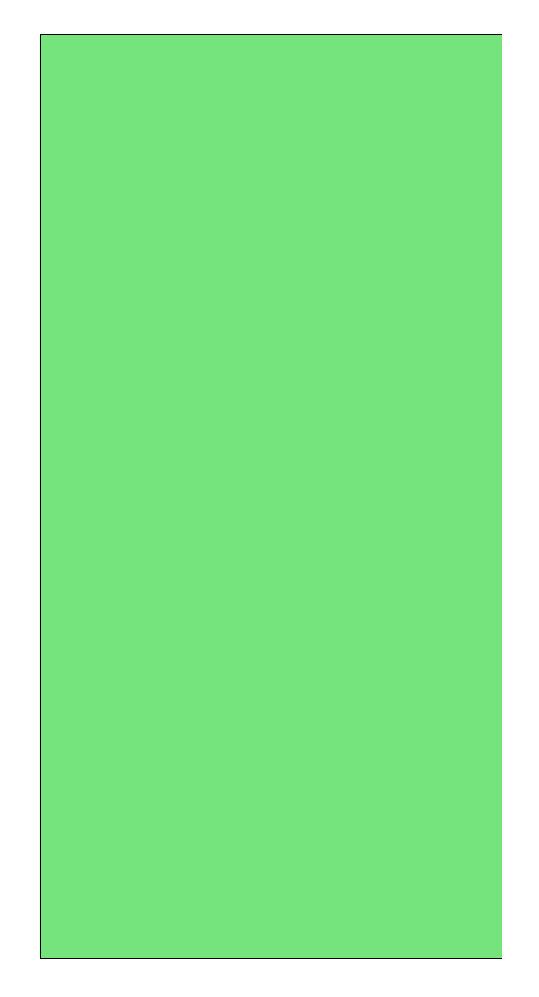} $\ $
        \includegraphics[height=0.02\textheight]{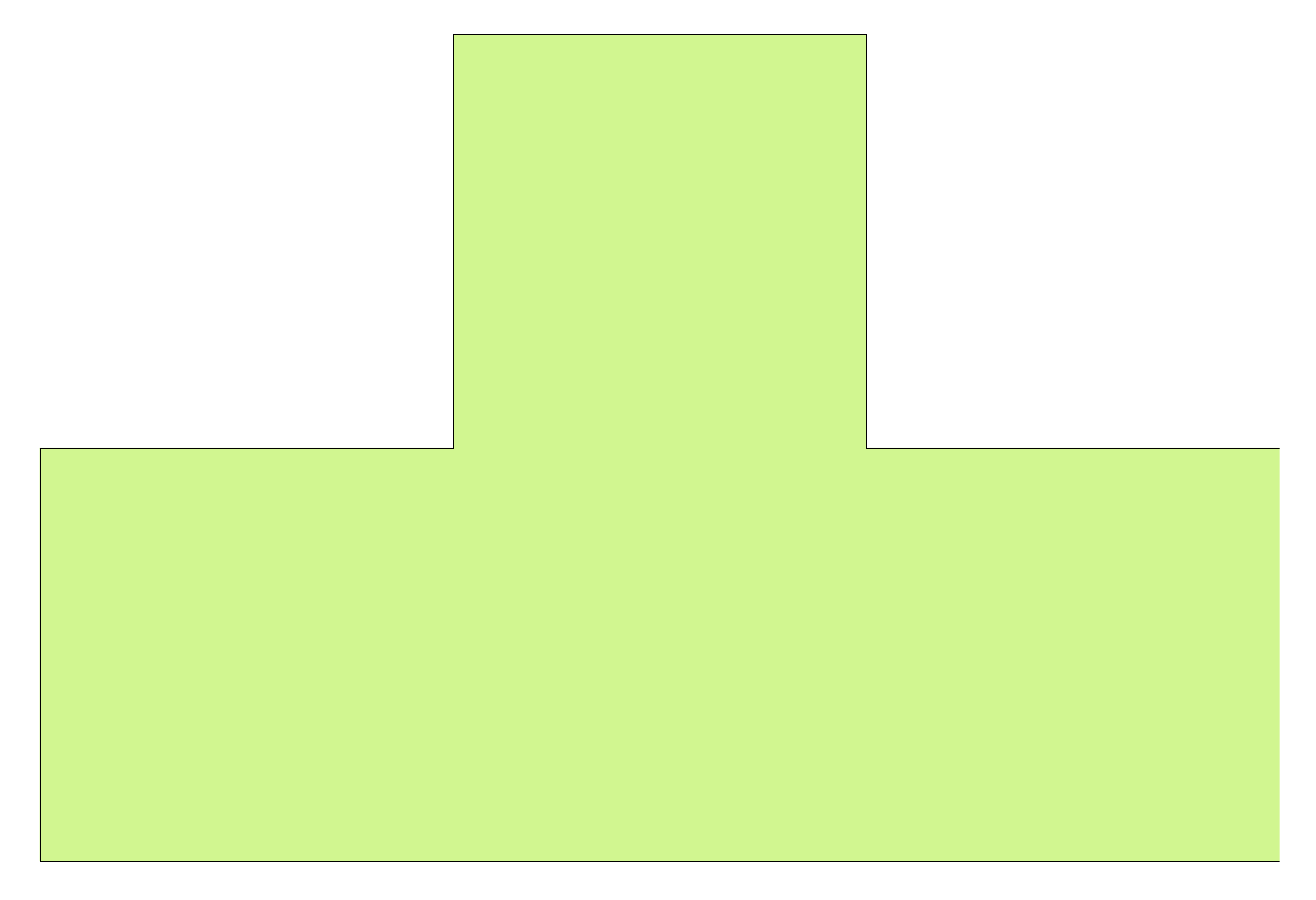} $\ $
        \includegraphics[height=0.02\textheight]{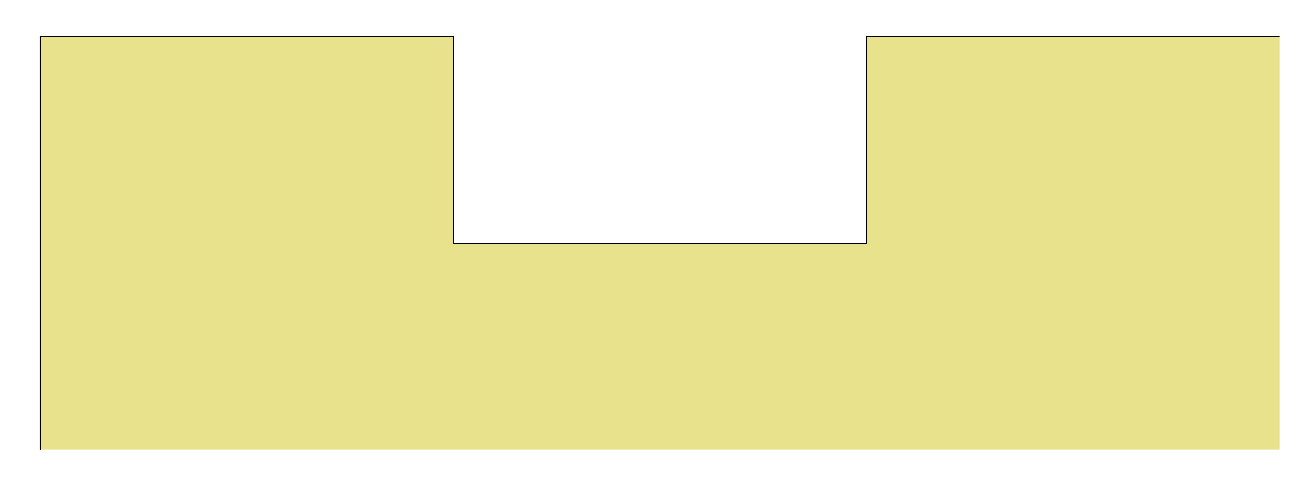} $\ $
        \includegraphics[height=0.02\textheight]{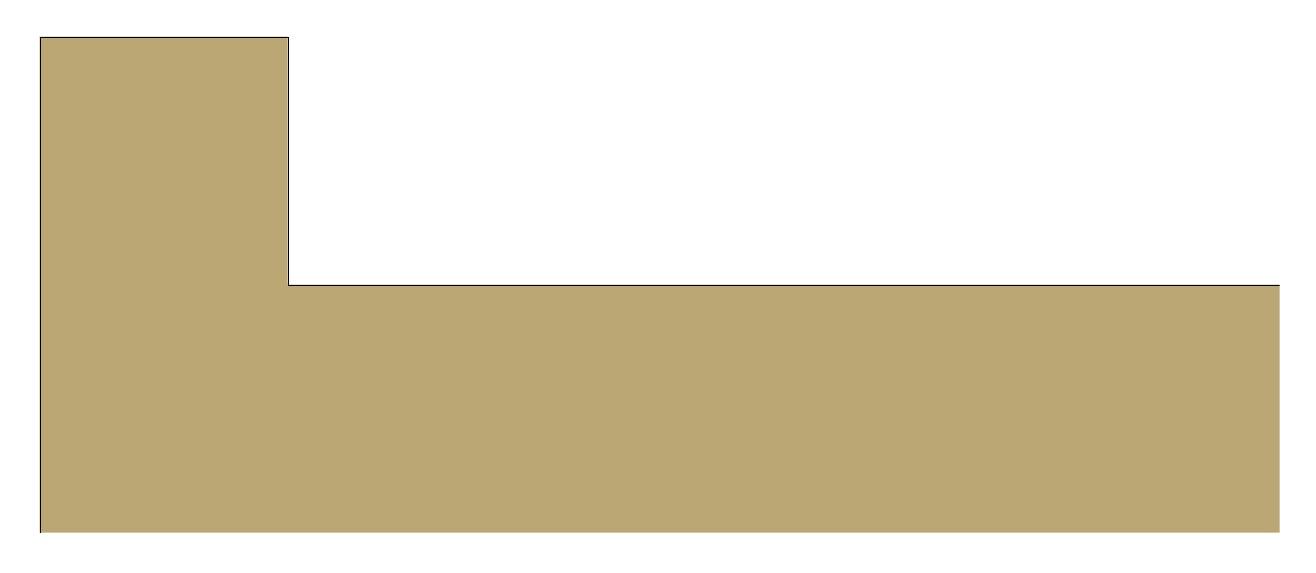} $\ $
        \includegraphics[height=0.02\textheight]{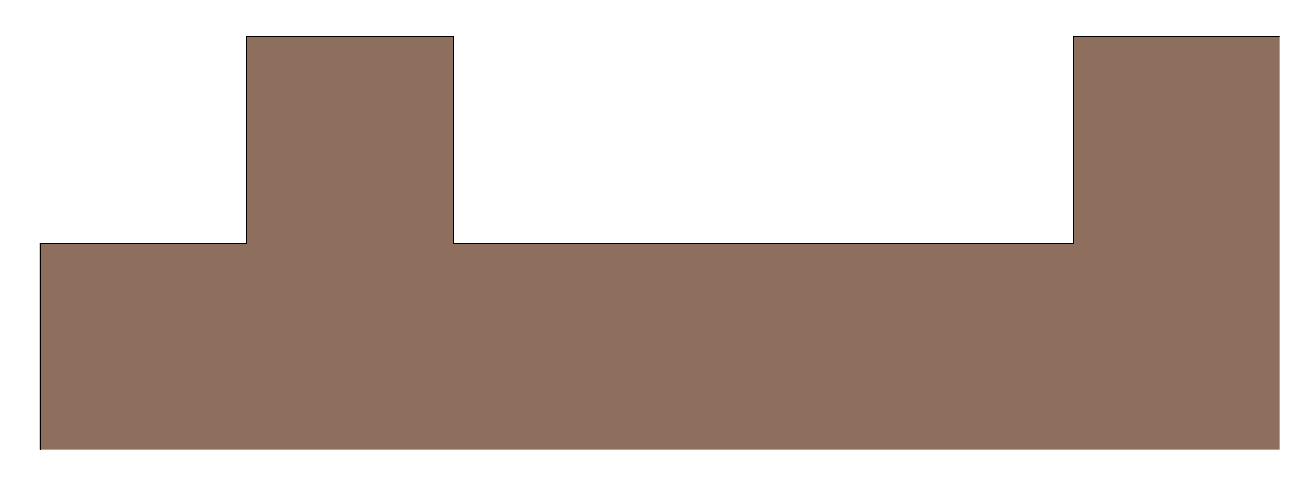} $\ $
        \includegraphics[height=0.02\textheight]{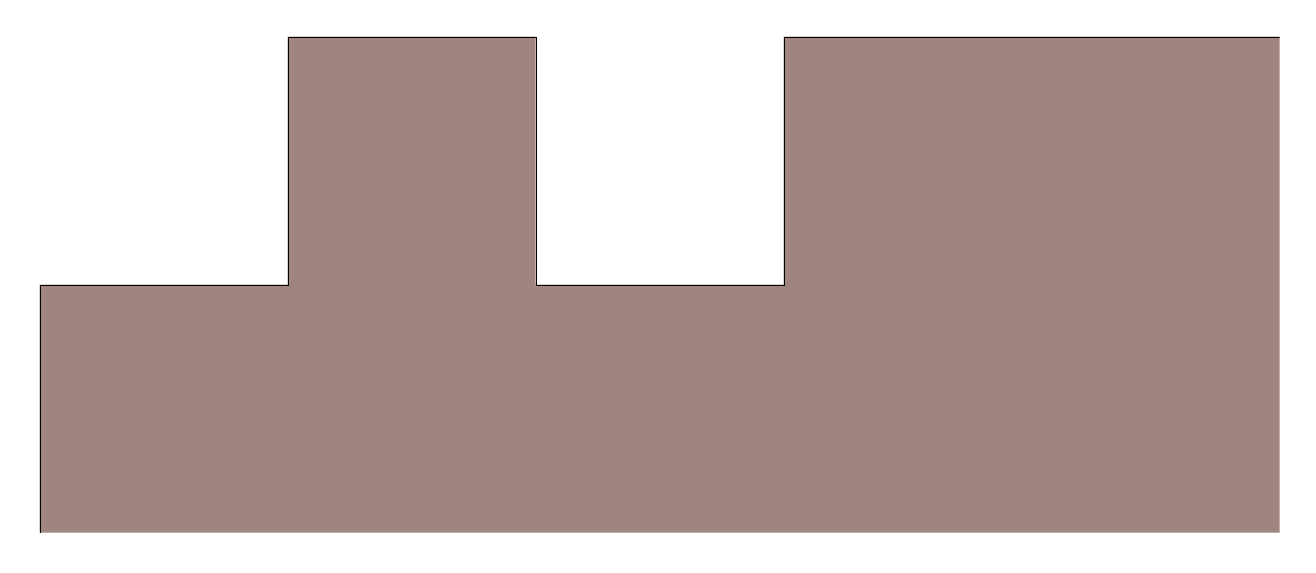} $\ $
        \includegraphics[height=0.02\textheight]{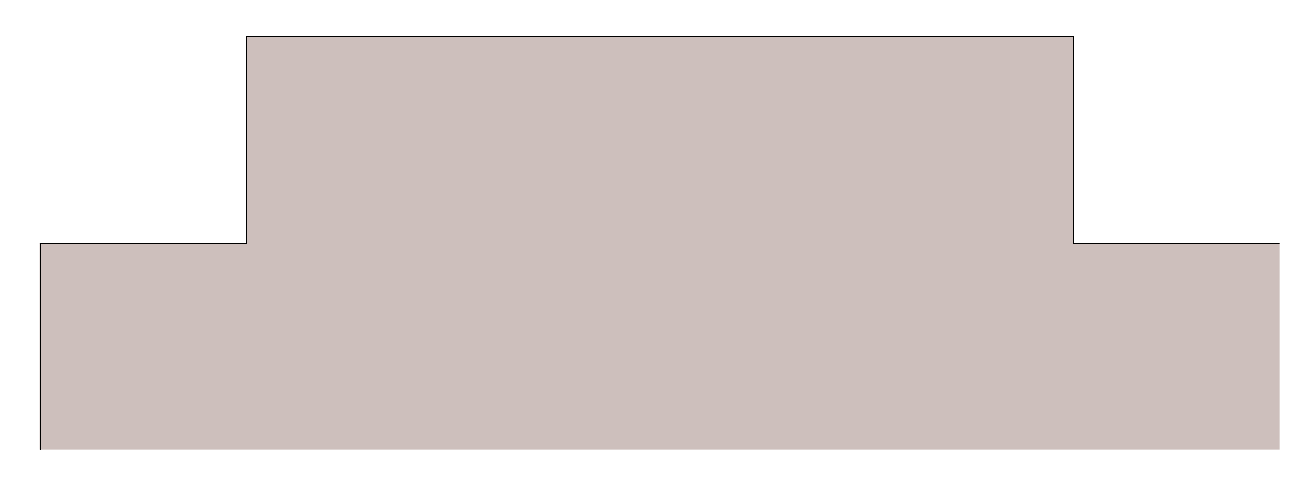}
    \end{center}
    \caption{Examples of admissible tiles}
    \label{fig:admissible:tiles}
\end{figure}

We begin the construction with defining the set $T$ of admissible tiles.  Each
tile $t \in T$ consists of two horizontal layers. The base layer is a single
connected block of width $w_t \leq 6$.  The second layer, placed on top of the
base one, is a subset (possibly empty) of $w_t$ blocks,
see \autoref{fig:admissible:tiles}.  For presentation purposes each
tile is given a unique, distinguishable colour.

Next, we construct the asserted rational specification following the general
construction method of defining a deterministic automaton with one state per
each possible partial tiling configuration using the set $T$ of available
tiles.  Tracking the evolution of attainable configurations while new tiles
arrive, we connect relevant configurations by suitable transition rules in the
automaton.  Finally, we (partially) minimise the constructed automaton removing
states unreachable from the initial empty configuration.  Once the automaton is
created, we tune the tiling sampler such that the target colour frequencies are
uniform, i.e.~each colour occupies, on average, approximately $\tfrac{1}{126}
\approx 0.7936\%$ of the outcome tiling area. \autoref{fig:tilings-7-126}
depicts an exemplary tiling generated by our sampler.
\begin{figure}[ht!]
\begin{subfigure}{.11\textwidth}
\centering
  \includegraphics[scale=0.8]{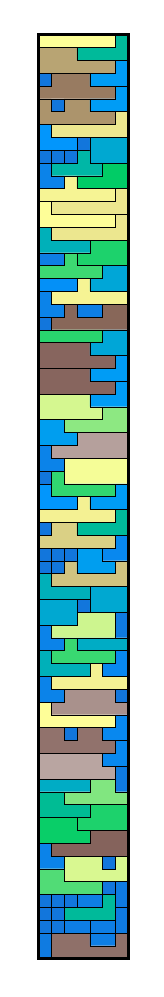}
\end{subfigure}                                       
\begin{subfigure}{.11\textwidth}                      
\centering                                            
  \includegraphics[scale=0.8]{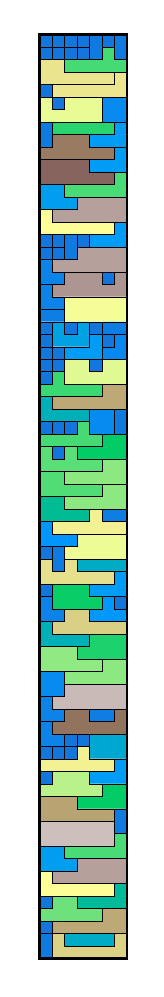}
\end{subfigure}                                       
\begin{subfigure}{.11\textwidth}                      
\centering                                            
  \includegraphics[scale=0.8]{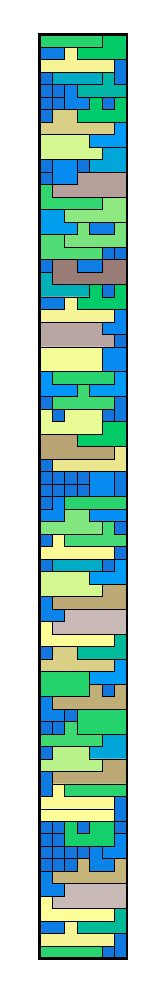}
\end{subfigure}                                       
\begin{subfigure}{.11\textwidth}                      
\centering                                            
  \includegraphics[scale=0.8]{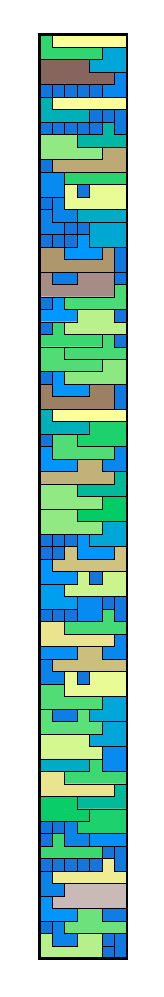}
\end{subfigure}                                       
\begin{subfigure}{.11\textwidth}                      
\centering                                            
  \includegraphics[scale=0.8]{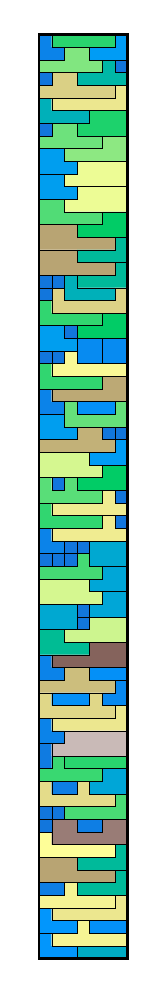}
\end{subfigure}                                       
\begin{subfigure}{.11\textwidth}                      
\centering                                            
  \includegraphics[scale=0.8]{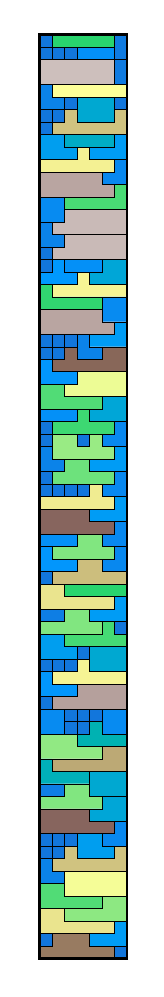}
\end{subfigure}                                       
\begin{subfigure}{.11\textwidth}                      
\centering                                            
  \includegraphics[scale=0.8]{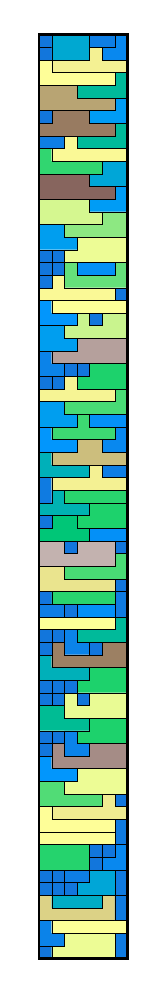}
\end{subfigure}                                       
\begin{subfigure}{.11\textwidth}                      
\centering                                            
  \includegraphics[scale=0.8]{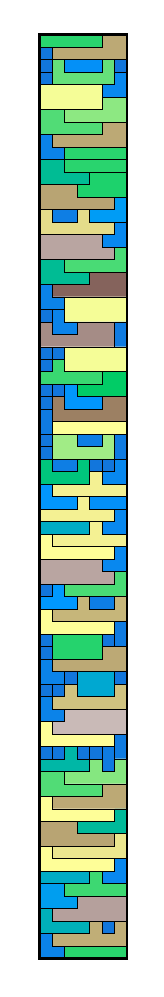}
\end{subfigure}
\caption{Eight random ${n \times 7}$ tilings of areas in the interval
$[500; 520]$ using in total $95$ different tiles.}
  \label{fig:tilings-7-126}
\end{figure}

The automaton corresponding to our tiling sampler consists of more than $2 000$
states and $28, 000$ transitions.
We remark that this example is a notable
improvement over the work of Bodini and Ponty~\cite{BodPonty} who were able to
sample ${n \times 6}$ tilings using $7$ different tiles (we handle $126$) with a corresponding
automaton consisting of roughly $1 500$ states and $3 200$ transitions.

\subsection{Simply-generated trees with node degree constraints.}
Next, we give an example of simple varieties of plane trees with fixed sets of
admissible node degrees, satisfying the general equation $$y(z) = z \phi(y(z))
\quad \text{for some polynomial} \quad \phi \colon \mathbb{C} \to \mathbb{C}\,
.$$

Let us consider the case of plane trees where nodes have degrees in the set $D
= \set{0,\ldots,9}$, i.e.~$\phi(y(z)) = a_0 + a_1 y(z) + a_2 {y(z)}^2 + \cdots
+ a_{9} {y(z)}^{9}$.  Here, the numbers \( a_0, a_1, a_2, \ldots, a_{9} \) are
nonnegative real coefficients.  We tune the corresponding algebraic
specification so to achieve a target frequency of $1\%$ for all nodes of
degrees $d \geq 2$. Frequencies of nodes with degrees $d \leq 1$ are left
undistorted. For presentation purposes all nodes with equal degree are given
the same unique, distinguishable colour. \autoref{fig:tree} depicts two
exemplary trees generated in this manner.
 \begin{figure}[ht!]
  \begin{subfigure}{.32\paperwidth}
      \centering
  \includegraphics[scale=0.07]{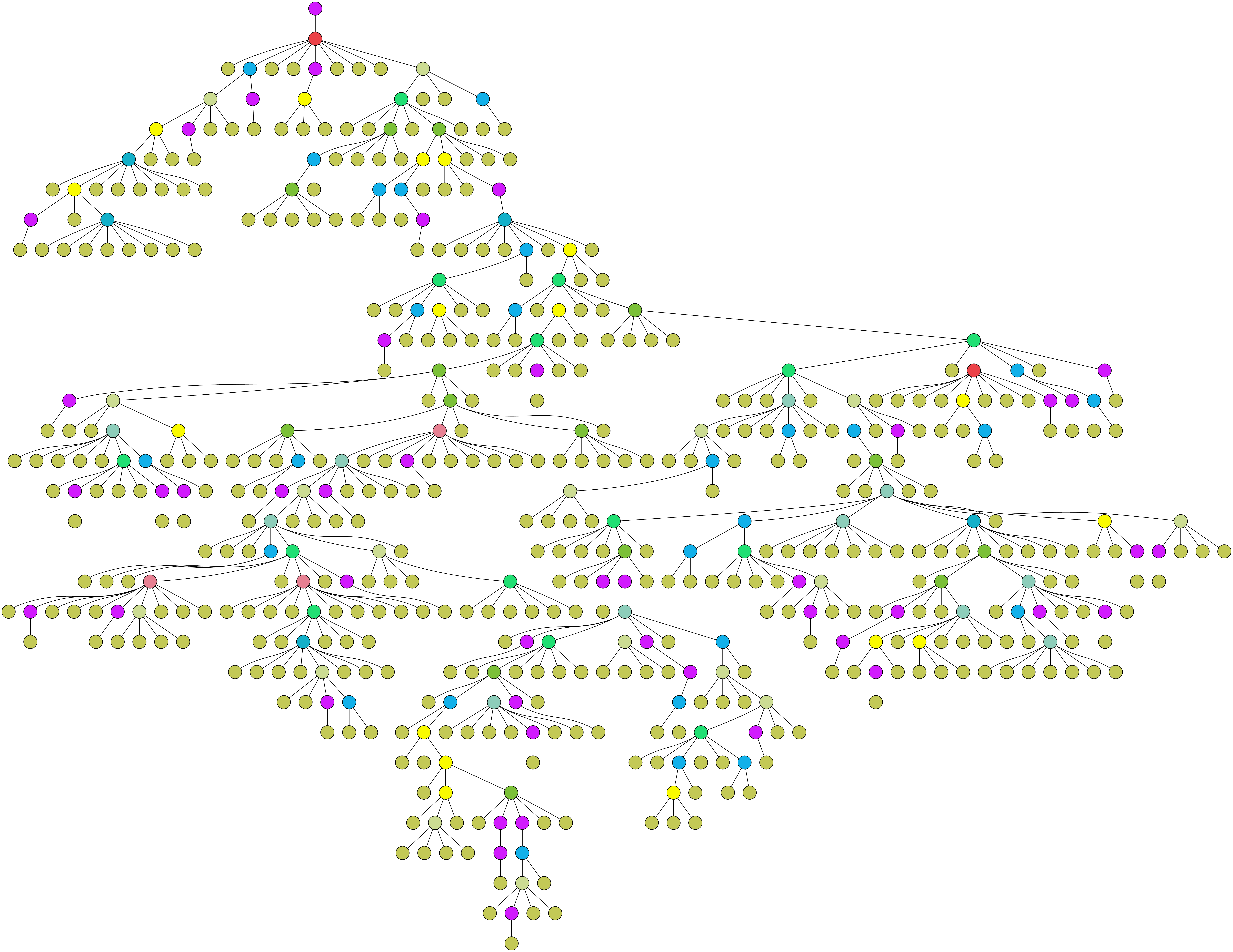}
\end{subfigure}
\begin{subfigure}{.4\paperwidth}
    \centering
  \includegraphics[width=\textwidth]{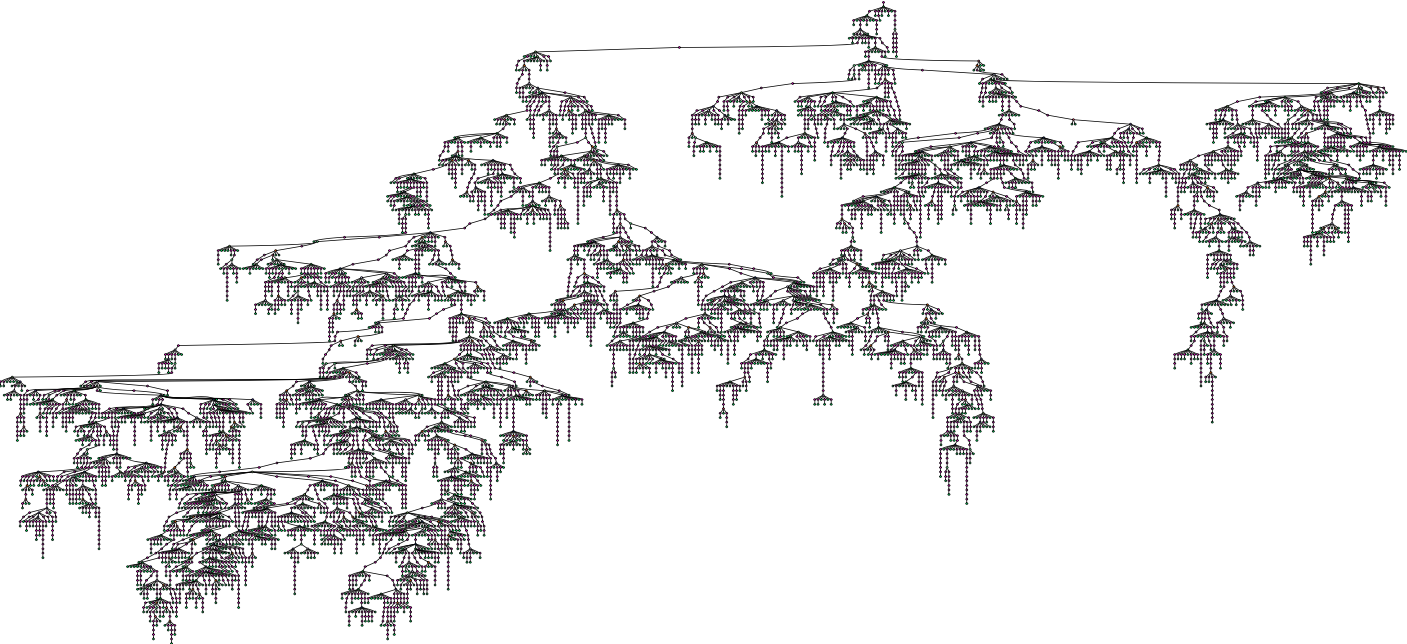}
\end{subfigure}
 \caption{Two random plane trees with degrees in the set ${D =
     \set{0,\ldots,9}}$. On the left, a tree of size in between $500$ and $550$;
     on the right, a tree of size in the interval $[10, 000; 10, 050]$.}
\label{fig:tree}
 \end{figure}

Empirical frequencies for the right tree of~\autoref{fig:tree} and a
simply-generated tree of size in between $10,000$ and $10,050$ with default
node degree frequencies are included in~\autoref{fig:tree-freqs}.

\begin{table*}[ht!]
\scalebox{.8}{
\begin{tabular}{c | c | c | c | c | c | c | c | c | c | c}
Node degree & $0$ & $1$ & $2$ & $3$ & $4$ & $5$ &
$6$ & $7$ & $8$ & $9$ \\ \hline
Tuned frequency & - - - & - - - & $1.00\%$ & $1.00\%$ & $1.00\%$ &
$1.00\%$ & $1.00\%$ & $1.00\%$ & $1.00\%$ & $1.00\%$\\
Observed frequency & $35.925\%$ & $56.168\%$ & $0.928\%$ & $0.898\%$ & $1.098\%$ &
$0.818\%$ & $1.247\%$ & $0.938\%$ & $1.058\%$ & $0.918\%$\\
Default frequency & $50.004\%$ & $24.952\%$ & $12.356\%$ & $6.322\%$ &
$2.882\%$ & $1.984\%$ & $0.877\%$ & $0.378\%$ & $0.169\%$ & $0.069\%$
\end{tabular}}
\caption{Empirical frequencies of the node degree distribution.}
\label{fig:tree-freqs}
\end{table*}

We briefly remark that for this particular problem, Bodini, David and Marchal
proposed a different, bit-optimal sampling procedure for random trees with
given partition of node degrees~\cite{DBLP:conf/caldam/BodiniJM16}.

\subsection{Variable distribution in plain $\lambda$-terms.}
To exhibit the benefits of distorting the intrinsic distribution of various
structural patterns in algebraic data types, we present an example
specification defining so-called plain $\lambda$\nobreakdash-terms with explicit control
over the distribution of de~Bruijn indices.

In their nameless representation due to de~Bruijn~\cite{deBruijn1972}
$\lambda$\nobreakdash-terms are defined by the formal grammar $L ::= \lambda L~|~(L L)~|~D$
where $D = \set{\idx{0},\idx{1},\idx{2},\ldots}$ is an infinite denumerable set
of so-called indices (cf.~\cite{BendkowskiGLZ16,GittenbergerGolebiewskiG16}).
Assuming that we encode de~Bruijn indices as a sequence of successors of zero
(i.e.~use a unary base representation), the class $\mathcal{L}$ of plain
$\lambda$\nobreakdash-terms can be specified as \( \mathcal{L} = \mathcal{Z} \mathcal{L} +
\mathcal{Z} {\mathcal{L}}^2 + \mathcal{D} \) where \( \mathcal{D} = \mathcal{Z}
\Seq(\mathcal{Z}) \).  In order to control the distribution of de~Bruijn
indices we need a more explicit specification for de~Bruijn indices. For
instance: \[ \mathcal{D} = \mathcal{U}_0 \mathcal{Z} + \mathcal{U}_1
    {\mathcal{Z}}^2 + \cdots + \mathcal{U}_k {\mathcal{Z}}^{k+1} +
\mathcal{Z}^{k+2} \Seq(\mathcal{Z})\, . \] Here, we roll out the $k+1$ initial indices and
assign distinct marking variables to each one of them, leaving the remainder
sequence intact. In doing so, we are in a position to construct a sampler tuned
to enforce a uniform distribution of $8\%$ among all marked indices,
i.e.~indices $\idx{0},\idx{1},\ldots,\idx{8}$, distorting in effect their
intrinsic geometric distribution.

\autoref{fig:lambda-term}~illustrates two random $\lambda$\nobreakdash-terms with
such a new distribution of indices. For presentation purposes, each index
in the left picture is given a distinct colour.

 \begin{figure}[ht!]
  \begin{subfigure}{.4\textwidth}
\centering
  \includegraphics[scale=0.1]{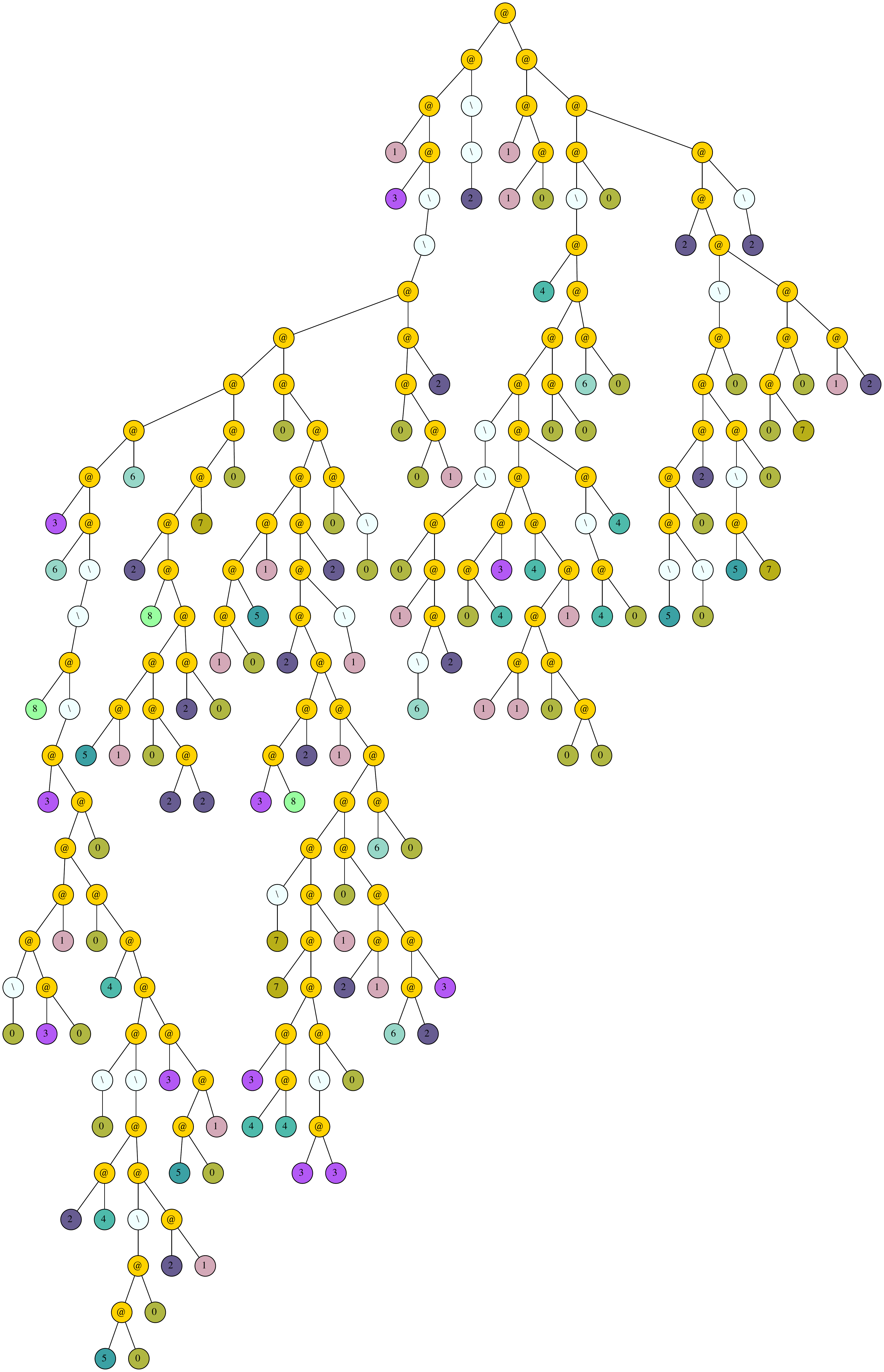}
\end{subfigure}
\begin{subfigure}{.4\textwidth}
\centering
  \includegraphics[width=\textwidth]{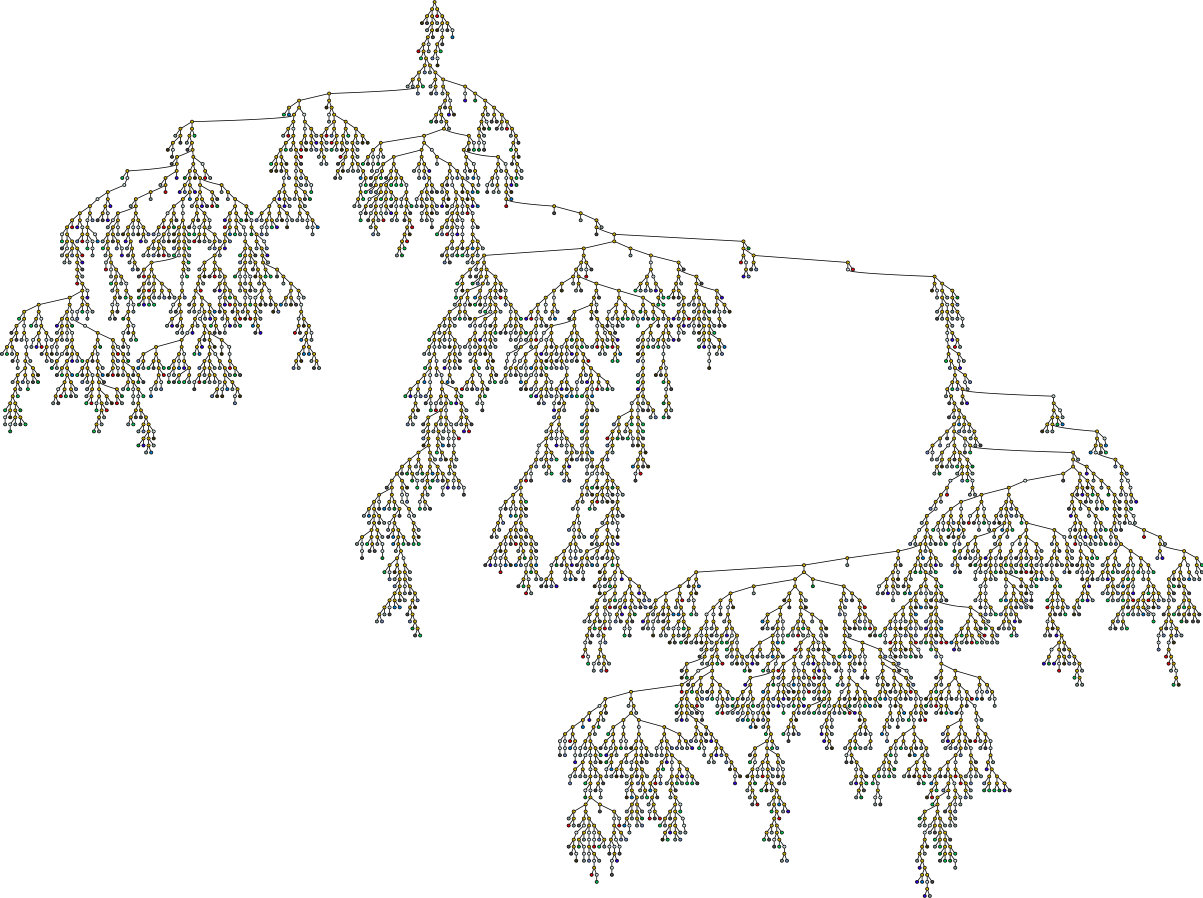}
\end{subfigure}
 \caption{On the left, a random $\lambda$\nobreakdash-term of size in the interval
 $[500;550]$; on the right, a larger example of a random $\lambda$\nobreakdash-term of size between $10,000$ and $10,050$.}
\label{fig:lambda-term}
 \end{figure}

Empirical frequencies for the right term of~\autoref{fig:lambda-term}
and a plain $\lambda$\nobreakdash-term of size in between $10,000$ and $10,050$ with
default de~Bruijn index frequencies are included
in~\autoref{fig:lambda-term-freqs}.

\begin{table*}[ht!]
\scalebox{.85}{
\begin{tabular}{c | c | c | c | c | c | c | c | c | c}
Index & $\idx{0}$ & $\idx{1}$ & $\idx{2}$ & $\idx{3}$ & $\idx{4}$ & $\idx{5}$ &
$\idx{6}$ & $\idx{7}$ & $\idx{8}$\\ \hline
Tuned frequency & $8.00\%$ & $8.00\%$ & $8.00\%$ & $8.00\%$ & $8.00\%$ &
$8.00\%$ & $8.00\%$ & $8.00\%$ & $8.00\%$\\
Observed frequency & $7.50\%$ & $7.77\%$ & $8.00\%$ & $8.23\%$ & $8.04\%$ &
$7.61\%$ & $8.53\%$ & $7.43\%$ & $9.08\%$\\
Default frequency & $21.91\%$ & $12.51\%$ & $5.68\%$ & $2.31\%$ &
$0.74\%$ & $0.17\%$ & $0.20\%$ & $0.07\%$ & - - -
\end{tabular}}
\caption{Empirical frequencies (with respect to the term size) of index distribution.}
\label{fig:lambda-term-freqs}
\end{table*}

Let us note that algebraic data types, an essential conceptual ingredient of
various functional programming languages such as Haskell or OCaml, and the
random generation of their inhabitants satisfying additional structural or
semantic properties is one of the central problems present in the field of
property-based software testing (see, e.g.~\cite{Claessen-2000,palka2012}). In
such an approach to software quality assurance, programmer-declared function
invariants (so-called properties) are checked using random inputs, generated
accordingly to some predetermined, though usually not rigorously controlled,
distribution. In this context, our techniques provide a novel and effective
approach to generating random algebraic data types with fixed average
frequencies of type constructors. In particular, using our methods it is
possible to \emph{boost} the intrinsic frequencies of certain desired subpatterns or
\emph{diminish} those which are unwanted.

\subsection{Weighted partitions.}
\label{subsection:weighted:partitions}
Integer partitions are one of the most intensively studied objects in number
theory, algebraic combinatorics and statistical physics. Hardy and Ramanujan
obtained the famous asymptotics which has later been refined by
Rademacher~\cite[Chapter VIII]{flajolet09}. In his article~\cite{Vershik1996},
Vershik considers several combinatorial examples related to statistical
mechanics and obtains the limit shape for a random integer partition of size \(
n \) with \( \alpha \sqrt{n} \) parts and summands bounded by \( \theta
\sqrt{n} \).
Let us remark that Bernstein, Fahrbach, and
Randall~\cite{bernstein2017analyzing} have recently analysed
the complexity of exact-size Boltzmann sampler for weighted partitions.
In the model of ideal gas, there are several particles (bosons)
which form a so-called assembly of particles. The overall energy of the system
is the sum of the energies
\(
\Lambda = \sum_{i=1}^N \lambda_{\vec i}
\)
where \( \lambda_i \) denotes the energy of \( i \)-th particle.  We assume
that energies are positive integers. Depending on the energy level \( \lambda
\) there are \( j(\lambda) \) possible available states for each particle; the
function \( j(\lambda) \) depends on the physical model. Since all the
particles are indistinguishable, the generating function \( P(z) \) for the
number of assemblies \( p(\Lambda) \) with energy \( \Lambda \) takes the form
\begin{equation}
    P(z) = \sum_{\Lambda=0}^\infty p(\Lambda) z^\Lambda =
        \prod_{\lambda > 0}
        \dfrac{1}{(1 - z^\lambda)^{j(\lambda)}}
    \enspace .
\end{equation}
In the model of \( d \)-dimensional harmonic trap (also known as the
Bose-Einstein condensation) according to \cite{chase1999canonical,
haugset1997bose, lucietti2008asymptotic} the number of states for a particle
with energy \( \lambda \) is \( { d + \lambda - 1 \choose \lambda } \) so that
each state can be represented as a multiset with \( \lambda \) elements having
\( d \) different colours.  Accordingly, an assembly is a multiset of particles
(since they are bosons and hence indistinguishable) therefore the generating
function for the number of assemblies takes the form
\begin{equation}
    P(z) =
    \MSet(\MSet\nolimits_{\geq 1}(\mathcal Z_1 + \cdots + \mathcal Z_d))
    \enspace .
\end{equation}

It is possible to control the expected frequencies of colours using our tuning
procedure and sample resulting assemblies as Young tableaux. Each row
corresponds to a particle whereas the colouring of the row displays the
multiset of included colours, see~\autoref{fig:bose:einstein}. We also
generated weighted partitions of expected size \( 1000 \) (which are too large
to display) with tuned frequencies of \( 5 \) colours,
see~\autoref{table:partition:frequencies}.

 \begin{figure}[ht!]
\begin{subfigure}{0.2\textwidth}
\centering
    \includegraphics[height=0.2\textheight]{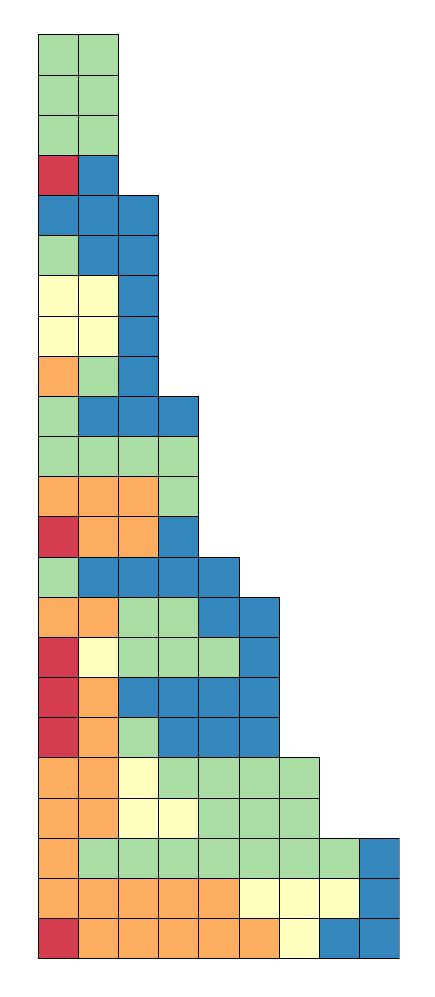}
    \caption{\tiny [5, 10, 15, 20, 25]}                           
\end{subfigure}                                                   
\begin{subfigure}{0.25\textwidth}                                 
\centering                                                        
    \includegraphics[height=0.2\textheight]{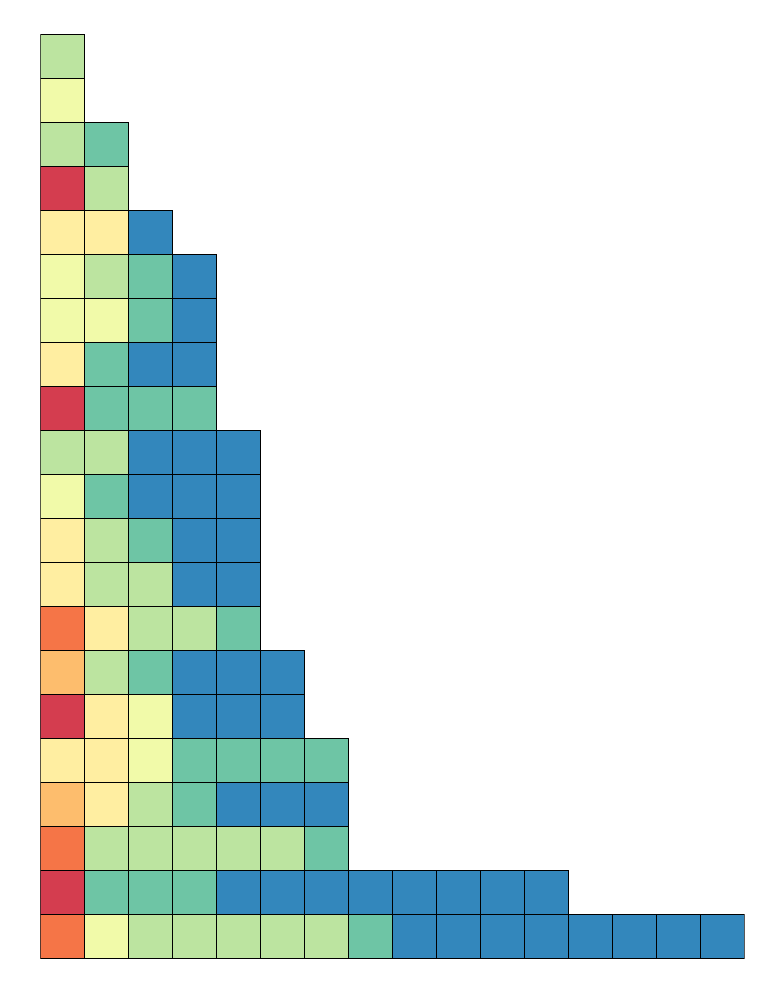}
    \caption{\tiny [4,4,4,4, 10, 20, 30, 40]}                     
\end{subfigure}                                                   
\begin{subfigure}{0.25\textwidth}                                 
\centering                                                        
    \includegraphics[height=0.2\textheight]{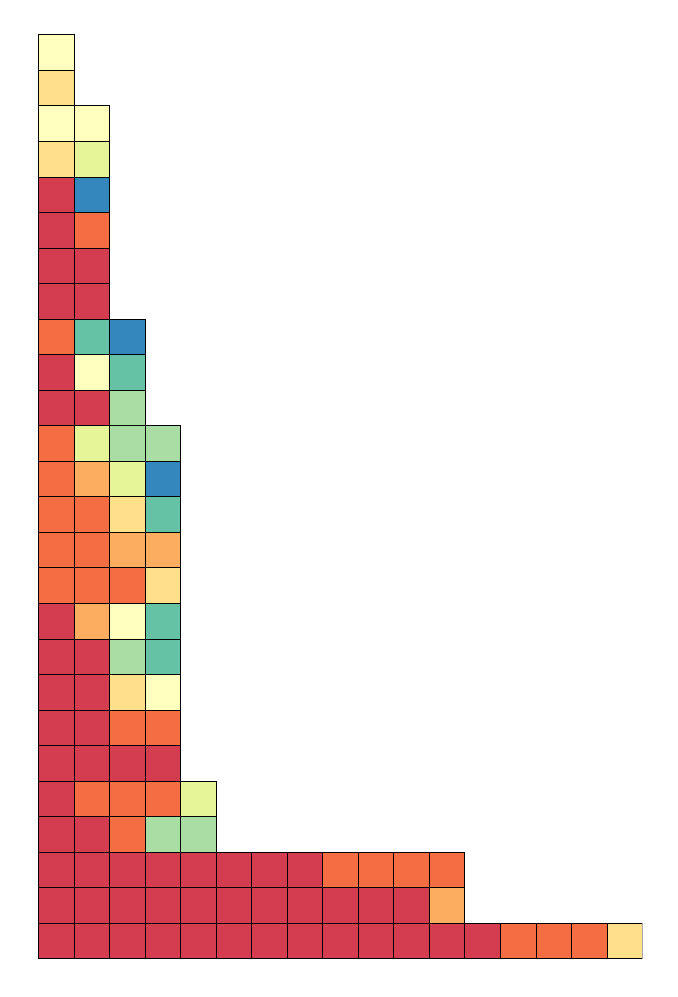}
    \caption{\tiny [80, 40, 20, 10, 9, 8, 7, 6, 5]}
\end{subfigure}                                                   
\begin{subfigure}{0.25\textwidth}                                 
\centering                                                        
    \includegraphics[height=0.2\textheight]{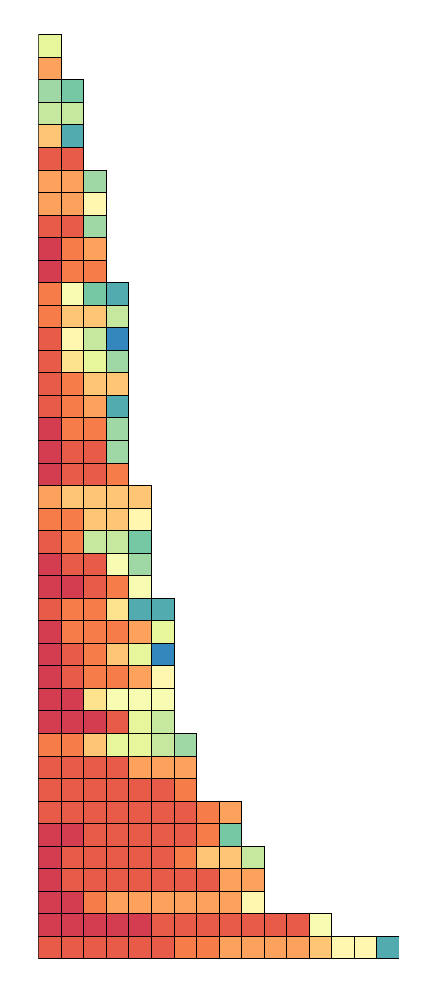}
    \caption{\tiny [20, 60, 30, 20, 10, $5^9$]}
\end{subfigure}
    \caption{Young tableaux corresponding to Bose--Einstein condensates with
        expected numbers of different colours. Notation \( [c_1, c_2, \ldots,
        c_k] \) provides the expected number \( c_j \) of the \( j \)-th
        colour, \( c_k^m \) is a shortcut for \( m \) occurrences of \( c_k \).}
    \label{fig:bose:einstein}
\end{figure}

\begin{table}[ht!]
\scalebox{.8}{
\begin{tabular}{c | c | c | c | c | c | c}
Colour index & $\idx{1}$ & $\idx{2}$ & $\idx{3}$ & $\idx{4}$ & $\idx{5}$ & size\\ \hline
    Tuned frequency    & $0.03$ & $0.07$ & $0.1$ & $0.3$ & $0.5$ & 1000\\ \hline
                       & $0.03$ & $0.08$ & $0.07$ & $0.33$ & $0.49$ & 957\\
                       & $0.03$ & $0.06$ & $0.09$ & $0.28$ & $0.54$ & 1099\\
    Observed frequency & $0.03$ & $0.08$ & $0.09$ & $0.34$ & $0.46$ & 992\\
                       & $0.04$ & $0.07$ & $0.1$  & $0.31$ & $0.49$ & 932\\
                       & $0.04$ & $0.09$ & $0.1$  & $0.25$ & $0.52$ & 1067\\
\end{tabular}}
\caption{Empirical frequencies of colours observed in random partition.}
\label{table:partition:frequencies}
\end{table}

Let us briefly explain our generation procedure. Boltzmann sampling for the outer \(
\MSet \) operator is described
in~\autoref{algorithm:mset},~\autoref{section:polya:structres}. The sampling of
inner \( \MSet_{\geq 1}(\mathcal Z_1 + \ldots + \mathcal Z_d) \) is more
delicate. The generating function for this multiset can be written as
\begin{equation}
    \MSet\nolimits_{\geq 1}( z_1 + \cdots + z_d ) = \prod_{i=1}^d \dfrac{1}{1 - z_i} - 1
    \enspace .
\end{equation}
In order to correctly calculate the branching probabilities, we introduce slack
variables \( s_1, \ldots, s_d \) satisfying \( (1 + s_i) = (1 - z_i)^{-1}
\). Boltzmann samplers for the newly determined combinatorial classes \( \Gamma
\mathcal S_i \) are essentially Boltzmann samplers for \( \Seq_{\geq
1}(\mathcal Z_i) \). Let us note that after expanding brackets the expression
becomes
\begin{multline*}
    \MSet\nolimits_{\geq 1} (z_1 + \cdots + z_d) ={}
    (s_1 + \cdots + s_d) 
     + (s_1 s_2 + \cdots + s_{d-1} s_d)
     + \cdots + s_1 s_2 \ldots s_d.
\end{multline*}
The total number of summands is \( 2^{d} - 1 \) where each summand corresponds
to choosing some subset of colours. Finally, let us explain how to precompute
all the symmetric polynomials and efficiently handle the branching process in
quadratic time using a dynamic programming approach. We can recursively define
two arrays of real numbers \( p_{k,j} \) and \( q_{k,j} \) satisfying
\begin{equation}
    \begin{cases}
        p_{1,j} = s_j , \quad j \in  \{ 1, \ldots, d \}; \\
        q_{k,d} = p_{k,d}, \quad k \in \{ 1, \ldots, d \}; \\
        q_{k,j} = p_{k,j} + q_{k,j+1},
            \quad j \in \{ k, \ldots, d-1 \},
            \quad k \in \{ 1, \ldots, d \};  \\
        p_{k,j} = s_{j-k+1} \cdot q_{k-1,j},
            \quad j \in \{ k, \ldots, d-1 \},
            \quad k \in \{ 2, \ldots d \};
    \end{cases}
\end{equation}
Arrays \( (p_{k,j})_{j=k}^d \) contain the branching probabilities determining
the next colour inside the \( k \)-th symmetric polynomial. Arrays \(
(q_{k,j})_{j=k}^d \) contain partial sums for the \( k \)-th symmetric
polynomial and are required in intermediate steps. Numbers \( q_{k,k} \) are
equal to the total values of symmetric polynomials \( (s_1 + \cdots + s_2),
(s_1s_2+\cdots +s_{d-1}s_d), \ldots, s_1s_2,\ldots, s_d \) and they define
initial branching probabilities to choose the number of colours.

\newpage
\subsection{Prototype sampler generator.}
Consider the following example of an input file for {\sf Boltzmann Brain}:

\begin{lstlisting}[mathescape,
                numbersep=5pt,
                gobble=2,
                frame=lines,
                framesep=2mm,
                numbers=none]
  -- Motzkin trees
  Motzkin = Leaf (3)
           | Unary Motzkin
           | Binary Motzkin Motzkin (2) [0.3].
\end{lstlisting}

\newcommand{\hs}[1]{{\color{blackred}\texttt{#1}}}

Here, a \hs{Motzkin} algebraic data type is defined. It consists of three
constructors: a constant \hs{Leaf} of weight three, a \hs{Unary} constructor of
weight one (default value if not explicitly annotated) and a constructor
\hs{Binary} of weight two together with an explicit tuning frequency of $30\%$.
Such a definition corresponds to the combinatorial specification $ \mathcal{M}
= {\mathcal{Z}}^3 + \mathcal{Z} \mathcal{M} + \mathcal{U} {\mathcal{Z}}^2
{\mathcal{M}}^2$ where the objective is to obtain the mean proportion of
$\mathcal{U} {\mathcal{Z}}^2 {\mathcal{M}}^2$ equal $30\%$ of the total
structure size. All the terms \hs{Leaf}, \hs{Unary}, \hs{Motzkin}, \hs{Binary}
are user-defined keywords. Given such a specification on input, {\sf bb} builds
a corresponding singular Boltzmann sampler implemented in form of a self-contained Haskell module.

\printbibliography

\appendix

\section{Convex optimisation: proofs and algorithms}
\label{section:convex:proofs}

Until now, we have left several important questions unanswered.  Firstly, what
is the required precision \( \varepsilon \) for multiparametric tuning?
Secondly, what is its precise computational complexity? In order to determine
the time and space complexity of our tuning procedure we need to explain some
technical decisions regarding the choice of particular optimisation methods.
In this section we prove that the optimisation procedures described
in \S~\ref{section:tuning} give the correct solution to the tuning problem.

\subsection{Proofs of the theorems.}
\hfill\\[2mm]
\noindent
\textit{Proof of~Theorem~\ref{theorem:general}.}
Let the following \emph{nabla-notation} denote the vector of derivatives (so-called
gradient vector) with
respect to the variable vector \( \vec z = (z_1, \ldots, z_k) \):
\begin{equation}
    \nabla_{\vec z} f(\vec z) =
    \left(
        \dfrac{\partial}{\partial z_1} f(\vec z),
        \ldots,
        \dfrac{\partial}{\partial z_k} f(\vec z)
    \right)^\top\, .
\end{equation}
We start with noticing that tuning the expected number of atom occurrences
is equivalent to solving the equation
\(
    \nabla_{\vec \xi} \log C(e^{\vec \xi}) = \vec \nu
\), see~Proposition~\ref{proposition:expected:value}.
Here, the right-hand side is equal to \( \nabla_{\vec \xi} (\vec \nu^\top \vec \xi) \) so
tuning is further equivalent to
\(
    \nabla_{\vec \xi} \left(
        \log C(e^{\vec \xi}) - \vec \nu^\top \vec \xi
    \right) = 0
\).
The function under the gradient is convex as it is a sum of a convex and
linear function. In consequence, the problem of minimising the function is equivalent to
finding the root of the derivative
\begin{equation}
    \log C(e^{\vec \xi}) - \vec \nu^\top \vec \xi \to \min_{\vec \xi}
    \enspace .
\end{equation}
\begin{Definition}{(Feasible points)}
    In the optimisation problem
    \begin{equation}
        \begin{cases}
            f(\vec z) \to \min, \\
            \vec z \in \Omega
        \end{cases}
    \end{equation}
    a point \( \vec z \) is called feasible if it belongs to the set \(
    \Omega \).
\end{Definition}

\vspace{2mm}\noindent
\textit{Proof of~Theorem~\ref{theorem:algebraic:tuning}.}
Let \( \vec N = (N_1, \ldots, N_k) \) be the vector of atom occurrences of each type.
    Consider the vector \( \vec z^\ast \) such that
    \(
    \mathbb E_{\vec z^\ast} (\vec N) = \vec \nu.
    \)
Let \( \vec c \) denote the logarithms of the values of generating functions at
point \( \vec z^\ast = e^{\vec \xi^\ast} \). Clearly, in such a case all inequalities in~\eqref{eq:optimisation:algebraic} become equalities and the point \( (\vec c,
\vec \xi^\ast) \) is feasible.

Let us show that if the point
\( (\vec c, \vec \xi ) \) is optimal, then all the inequalities in~\eqref{eq:optimisation:algebraic} become equalities.
Firstly, suppose that the inequality
\begin{equation}
    c_1 \geq \log \Phi_1(e^{\vec c}, e^{\vec \xi})
\end{equation}
    does not turn to an equality. Certainly, there is a \emph{gap} and the value \( c_1 \) can be
decreased. In doing so, the target function value is decreased as well. Hence,
the point \( (\vec c, \vec \xi) \) cannot be optimal.

Now, suppose that the initial inequality does turn to equality,
however \( c_k > \log \Phi_k(e^{\vec c}, e^{\vec \xi}) \) for some $k \neq 1$.
Since the system is strongly connected, there exists a path
\( P = c_1 \to c_2 \to \cdots \to c_k \) (indices are
chosen without loss of generality) in the
corresponding dependency graph. Note that for pairs of consecutive
variables $(c_i, c_{i+1})$ in $P$, the function $\log \Phi_i(e^{\vec c}, e^{\vec \xi})$
is strictly monotonic in $c_{i+1}$ (as its monotonic and references $c_{i+1}$).
In such a case we can decrease $c_{i+1}$ so to assure that
$c_i > \log \Phi_i(e^{\vec c}, e^{\vec \xi})$ while the point
\( (\vec c, \vec \xi) \) remains feasible. Decreasing
$c_{i+1},c_i,\ldots,c_1$ in order, we finally arrive at a
feasible point with a decreased target function value. In consequence,
\( (\vec c, \vec \xi) \) could not have been optimal to begin with.

So, eventually, the optimisation problem reduces to minimising the expression
subject to the system of equations
\(
    \vec c = \log \vec \Phi(e^{\vec c}, e^{\vec \xi})
\)
or, equivalently,
\(
    \vec C(\vec z) = \vec \Phi(\vec C(\vec z), \vec z)
\)
and can be therefore further reduced to~Theorem~\ref{theorem:general}.

\vspace{2mm}\noindent
\textit{Proof of~Theorem~\ref{theorem:singular:tuning}.}
By similar reasoning as in the previous proof, we can show that the maximum is attained when all
the inequalities turn to equalities. Indeed, suppose that at least one inequality
is strict, say
\(
    c_j > \log \Phi_j(e^{\vec c}, e^{\xi}, e^{\vec \eta})
\).
The value \( c_j \) can be slightly decreased by \( \varepsilon \) by choosing
    a sufficiently small distortion \( \varepsilon \) to turn all the
    equalities containing \( c_j \) in the right-hand side \( \log
    \Phi_i(e^{\vec c}, e^{\xi}, e^{\vec \eta}) \) to strict inequalities,
    because the right-hand sides of each of the inequalities are monotonic
    functions with respect to \( c_j \).  This procedure can be repeated until
    all the equalities turn into inequalities. Finally, we slightly decrease
    the value \( \xi \) to increase the target function while still staying
    inside the feasible set, because of the monotonicity of
    the right-hand side with respect to \( \xi \).

Let us fix \( \vec u = e^{\vec \eta} \). For rational and algebraic grammars, within the
Drmota--Lalley--Woods framework, see for instance~\cite{drmota1997systems}, the
 corresponding generating function singular approximation takes the form
\begin{equation}
\label{eq:asymptotic:singular}
    C(z, \vec u) \sim a_0(\vec u) - b_0(\vec u) \left(1 - \dfrac{z}{\rho(\vec
    u)}\right)^{t}
    \enspace .
\end{equation}
If \( t < 0 \), then the asymptotically dominant term becomes \( -b_0 \left( 1 - \frac{z}{\rho(\vec u)} \right)^{t}\). In this case, tuning the target expected frequencies
corresponds to solving the following equation as $z \to \rho(u)$:
\begin{equation}
\label{eq:asymptotic:expectation}
\mathrm{diag}(\vec u)
    \dfrac
    { [z^n] \nabla_{\vec u} C(z, \vec u) }
    { [z^n] C(z, \vec u) } = n \vec \alpha \enspace .
\end{equation}
Let us substitute the asymptotic expansion~\eqref{eq:asymptotic:singular}
into~\eqref{eq:asymptotic:expectation} to track how \( \vec u \) depends on \(
\vec \alpha \):
\begin{equation}
\mathrm{diag}(\vec u)
    \dfrac
        {
            [z^n]
            t b_0(\vec u)
            \left(
                1 - \dfrac{z}{\rho(\vec u)}
            \right)^{t-1}
            z \dfrac{\nabla_{\vec u}\rho(\vec u)}{\rho^2(\vec u)}
        }
        {
            [z^n]
            b_0(\vec u)
            \left(
                1 - \dfrac{z}{\rho(\vec u)}
            \right)^{t}
        }
        = - n \vec \alpha
    \enspace .
\end{equation}
Only dominant terms are accounted for. Then, by the binomial theorem
\begin{equation}
\mathrm{diag}(\vec u)
 b_0(\vec u)
 \dfrac{t}{n} { t-1 \choose n }
    \dfrac{z\nabla_{\vec u}\rho(\vec u)}{\rho^2(\vec u)}
    b_0(\vec u)^{-1}
    { t \choose n}^{-1}
    = - \vec \alpha
    \enspace ,
\end{equation}
With \( z = \rho(\vec u) \), as \( n \to \infty \),
we obtain
after cancellations
\begin{equation}
    \mathrm{diag}(\vec u)
    \dfrac
        {\nabla_{\vec u} \rho(\vec u)}
        {\rho(\vec u)}
    = - \vec \alpha
\end{equation}
which can be rewritten as
\begin{equation}\label{eq:asymptotic:expectation:two}
    \nabla_{\vec \eta} \log \rho( e^{\vec \eta} ) = -\vec \alpha
    \enspace .
\end{equation}
Passing to exponential variables~\eqref{eq:asymptotic:expectation:two} becomes
\begin{equation}
    \nabla_{\vec \eta} ( \xi(\vec \eta) + \vec \alpha^\top \vec \eta ) = 0
    \enspace .
\end{equation}
As we already discovered, the dependence \( \xi(\vec \eta) \) is given by the
system of equations because the maximum is achieved only when all inequalities
turn to equations. That is, tuning the singular sampler is equivalent to
maximising \( \xi + \vec \alpha^\top \vec \eta \) over the set of feasible
points.

\begin{Remark}
    For ordinary and singular samplers, the corresponding feasible set remains
    the same; what differs is the optimised target function. Singular samplers
    correspond to imposing an infinite target size. In practice, however, the
    required singularity is almost never known \emph{exactly} but rather
    calculated up to some feasible finite precision. The tuned structure size
    is therefore enormously large, but still, nevertheless, finite. In this
    context, singular samplers provide a natural \emph{limiting} understanding of
    the tuning phenomenon and as such, there are several possible ways of
    proving~Theorem~\ref{theorem:singular:tuning}.
\end{Remark}

\autoref{fig:binary:picture} illustrates the feasible set for the class of
binary trees and its transition after applying the log-exp transform, turning
the set into a convex collection of feasible points.  In both figures, the
singular point is the rightmost point on the plot. Ordinary sampler tuning
corresponds to finding the tangent line which touches the set, given the angle
between the line and the abscissa axis.
\begin{figure}[hbt!]
    \begin{center}
        \includegraphics[width=0.49\textwidth]{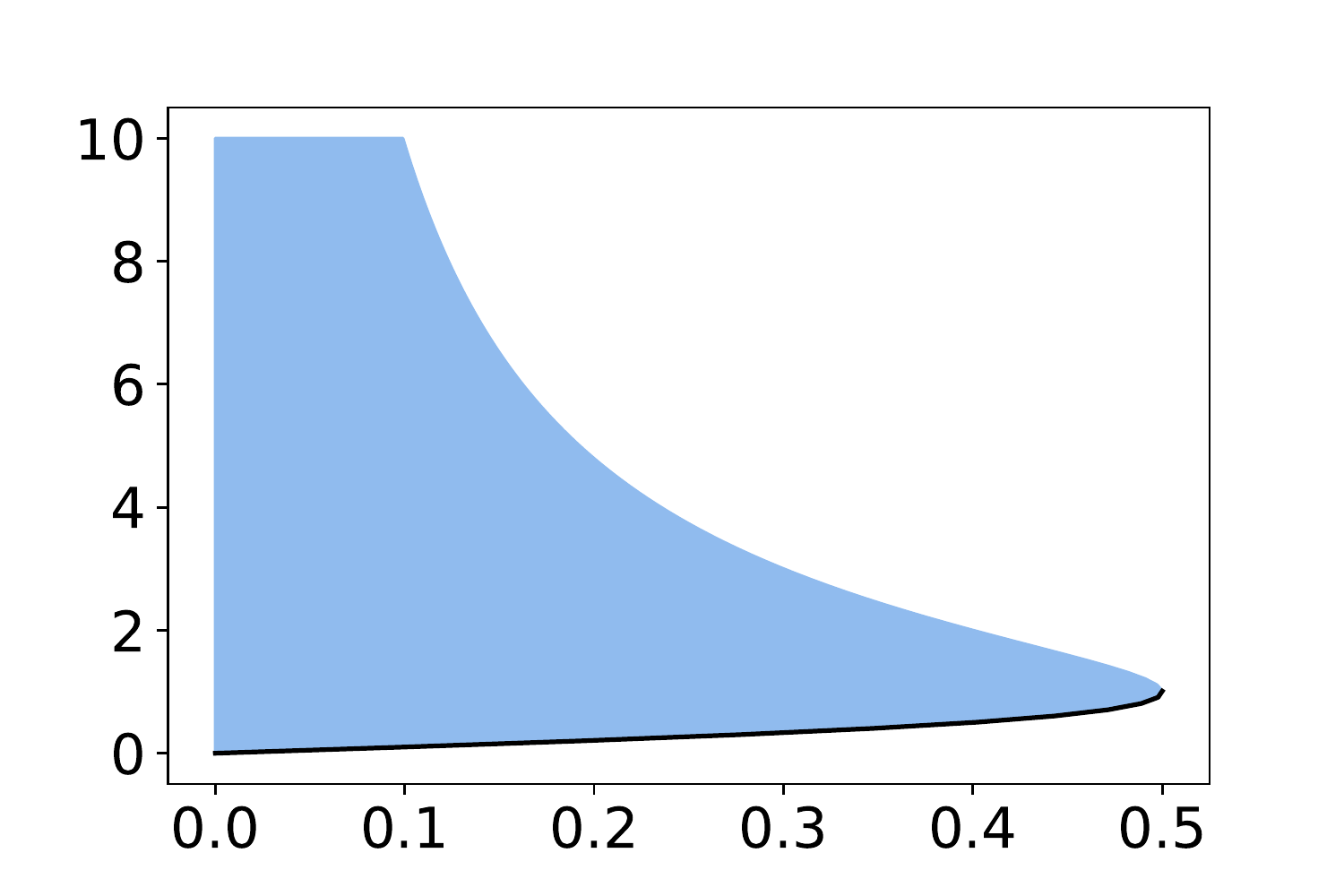}
        \includegraphics[width=0.49\textwidth]{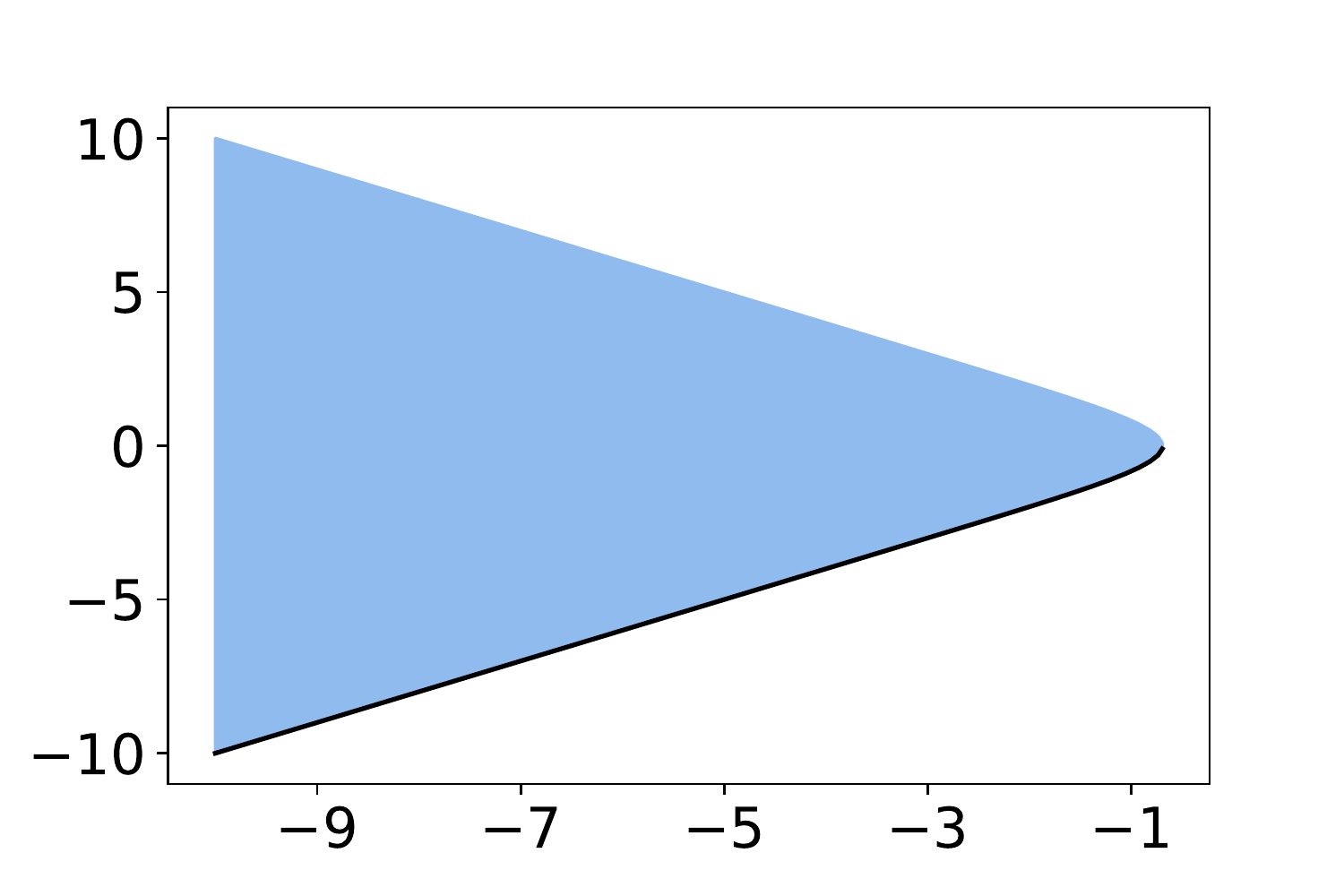}
    \end{center}
    \caption{Binary trees \( B \geq z + zB^2 \) and log-exp transform of the
    feasible set. The black curve denotes the principal branch of the generating
function $B(z)$ corresponding to the class of binary trees.}
    \label{fig:binary:picture}
\end{figure}

\subsection{Disciplined convex programming and optimisation algorithms.}
In the subsequent proofs, we present the framework of Disciplined Convex
Programming (DCP in short) and show how to incorporate elementary combinatorial
constructions into this framework.
Nesterov and Nemirovskii \cite{nesterov1994interior} developed a seminal polynomial-time
optimisation algorithm for convex programming which involves the construction of certain
\emph{self-concordant barriers} related to the feasible set of points.
The arithmetic complexity of their method is
\begin{equation}
    O\left(
        \log \dfrac{1}{\varepsilon}
        \sqrt{\vartheta}
        \mathcal N
    \right)
\end{equation}
where \( \mathcal N \) is the arithmetic complexity of a single Newton iteration step,
\( \vartheta \) is the so-called constant of self-concordeness of the barriers
and \( \varepsilon \) is the target precision. Before we go into each of the
terms, we mention that for sparse matrix representations, it is possible to
accelerate the speed of the Newton iteration, i.e.~the step of solving the system of
linear equations
\begin{equation}
    A \vec x = \vec b
    \quad \text{where} \quad
    A \in \mathbb R^{m \times m} \text{ and }
    \vec x, \vec b \in \mathbb R^{m}
\end{equation}
from \( O(m^3) \) to \( O(m^2) \).

Unfortunately, for general convex programming problems there is no constructive
general-purpose barrier construction method, merely existence proofs.
Fortunately, Grant, Boyd, and Ye~\cite{grant2006disciplined} developed
the DCP framework which
automatically constructs suitable barriers for the user. Moreover, DCP
also automatically provides the starting feasible point which is itself a
nontrivial problem in general.  As its price, the user is obliged to provide a
certificate that the constructed problem is convex, i.e.~express all convex
functions in terms of a predefined set of elementary convex functions.

In our implementation, we rely on two particular solvers, a second-order
(i.e.~using second-order derivatives) Embedded Conic Solver
(\texttt{ECOS})~\cite{domahidi2013ecos} and recently developed first-order
(i.e.~using only first-order derivatives) Splitting Conic Solver (\texttt{SCS})
algorithm~\cite{o2016conic}. The conversion of the DCP problem into
its standard form is done using  {\sf cvxpy}, a Python-embedded modelling
language for disciplined convex programming~\cite{cvxpy}.

\vspace{2mm}\noindent
\textit{Proof of~Theorem~\ref{theorem:main}.}
We start with showing that the tuning procedure can be effectively
represented in the framework of DCP.

In our case, every inequality takes the form
\begin{equation}\label{eq:dcp:inequality}
    c_i \geq \log \left(
        \sum_{i=1}^m e^{\ell_i (\vec c, \vec z)}
    \right)
\end{equation}
where \( \ell_i(\vec c, \vec z) \) are some linear functions.
Appreciably, the log-sum-exp function belongs to the set of admissible constructions
of the DCP framework.

Converting the tuning problem into DCP involves creating some slack variables.
For each product of two terms \( X \times Y \) we create slack
variables for \( X \) and \( Y \) which are represented by
the variables \( \xi \) and \( \eta \) in the log-exp realm as
\begin{equation}
    e^{\xi} = X \qquad \text{and} \qquad e^{\eta} = Y
    \enspace .
\end{equation}
Next, we replace \( X \times Y \) by \( e^{\xi + \eta}  \) as composition of
    addition and exponentiation is a valid DCP program. Since every expression
    in systems corresponding to considered combinatorial classes is a sum of
    products, the corresponding restriction~\eqref{eq:dcp:inequality} is
    converted to a valid DCP constraint using the elementary log-sum-exp
    function.

The sequence operator \( \Seq(\mathcal A) \) which converts a generating function
    \( A(\vec z) \) into \( (1 - A(\vec z))^{-1} \) is \emph{unfolded} by adding an
extra equation into the system in form of
\begin{equation}
    D := \Seq A(\vec z)  \quad \text{whereas} \quad
    D = 1 + A D\, .
\end{equation}
Two additional constructions, \( \MSet \) and \( \Cycle \) are treated in a
    similar way. Infinite sums are replaced by finite ones because the
    difference in the distribution of truncated variables is a negative
    exponent in the truncation length, and hence negligible.

Using the DCP method, the constant of self-concordness of the barriers is equal
    to \( \vartheta = O(L) \), where \( L \) is the length of the problem
    description. This includes the number of combinatorial classes, number of
    atoms for which we control the frequency and the sum of lengths of
    descriptions of each specification, i.e.~their overall length.  In total,
    the complexity of optimisation can be therefore crudely estimated as
\begin{equation}
    O \left(
        L^{3.5} \log \frac{1}{\varepsilon}
    \right) \, .
\end{equation}
Certainly, the complexity of tuning is polynomial, as stated. We emphasise that in
practice, using sparse matrices this can be further reduced to \(
O(L^{2.5} \log (1/\varepsilon)) \).

\begin{Remark}
    Weighted partitions, one of our previous applications, involves a multiset operator
    \( \MSet\nolimits_{\geq 1}(\CS Z_1 + \cdots + \CS Z_d) \) which generalises
    to
    \(
    \Seq(\mathcal C_1) \Seq(\mathcal C_2) \cdots \Seq(\CS C_d) - 1
    \)
    and does not immediately fall into the category of admissible
    operators as it involves subtraction. This is a
    general \emph{weak point} of Boltzmann sampling involving usually a huge
    amount of rejections, in consequence substantially slowing down the generation
    process. Moreover, it also disables our convex optimisation tuning procedure
    because the constructions involving the minus sign cease to be convex
    and therefore do not fit the DCP framework.

    We present
    the following change of variables for this operator, involving a quadratic number of slack
    variables.
    The \( \Seq(\CS C_i) \) operator yielding the generating function \( (1 -
    C_i(\vec z))^{-1} \) is replaced by \( (1 + \CS S_i) \) where \( \CS S_i \)
    satisfies
    \begin{equation}
        \CS S_i = \CS C_i + \CS S_i \CS C_i
        \enspace .
    \end{equation}
    Next, we expand all of the brackets in the product \( \prod_{i=1}^d(1 + \CS S_i) - 1 \).
    Consequently, we define the following arrays \( \CS P_{i,j} \) and \( \CS Q_{i,j} \):
    \begin{equation}
    \begin{cases}
        \CS P_{1,j} = \CS C_j, \quad j \in \{1, \ldots, d \} \\
        \CS Q_{k,d} = \CS P_{k,d}, \quad k \in \{ 1, \ldots, d \} \\
        \CS Q_{k,j} = \CS P_{k,j} + \CS Q_{k,j+1},
            \quad
            j \in \{ k, \ldots, d-1\},\
            k \in \{ 1, \ldots, d \} \\
        \CS P_{k,j} = \CS C_{j-k+1} \cdot \CS Q_{k-1,j},
            \quad
            j \in \{ k, \ldots, d-1\},\
            k \in \{ 2, \ldots, d \}
        \enspace .
    \end{cases}
    \end{equation}
    Semantically, as in \S~\ref{subsection:weighted:partitions},
    \( (\mathcal P_{i,j})_{j=k}^d \) and \( (\mathcal Q_{i,j})_{j=k}^d \)
    denote the summands inside symmetric polynomials
    \begin{equation}
        \begin{cases}
            \CS Q_{1,1} = \CS S_{1} + \CS S_2 + \ldots + \CS S_d \enspace ,\\
            \CS Q_{2,2} = \CS S_1 ( \CS S_2 + \ldots + \CS S_d) +
                          \CS S_2 ( \CS S_3 + \ldots + \CS S_d) +
                          \ldots + \CS S_{d-1} \CS S_d \enspace , \\
                          \CS Q_{3,3} = \CS S_1 (
                                \CS S_2 \CS S_3 +
                                \ldots +
                                \CS S_{d-1} \CS S_d) +
                          \ldots +
                          \CS S_{d-2} \CS S_{d-1} \CS S_d
        \end{cases}
    \end{equation}
    and the auxiliary partial sums used to recompute the consequent
    expressions, respectively. So for instance when \( d = 5 \)
    we obtain
    \def\sa{{\CS S_1}}
    \def\sb{{\CS S_2}}
    \def\sd{{\CS S_3}}
    \def\se{{\CS S_4}}
    \def\sh{{\CS S_5}}
    \begin{equation}
        \mathcal P =
        \begin{pmatrix}
        \sa    &  \sb                 & \sd         & \se                 &  \sh
            \\
        0      &  \sa(\sb+\ldots+\sh) & \ldots      & \sd(\se+\sh)        &  \se \sh
            \\
        0      &  0                   & \sa(\ldots) & \sb(\sd(\se+\sh)+\se \sh)
                                                                &  \sd \se \sh
            \\
        0      &  0                   & 0           & \sa(\ldots)
                                                                &  \sb \ldots \sh
            \\
        0      &  0                   & 0           & 0  &  \sa \ldots \sh
        \end{pmatrix}\, .
    \end{equation}
    The union of classes in each row gives corresponding symmetric polynomial
    \( \CS Q_{k,k} \),
    and the partial sum of elements in the row gives the elements of \( \CS Q \).

    Finally, the expression
    \( \prod_{i=1}^d (1 + \CS S_i) - 1 \)
    is replaced by the sum of elementary symmetric polynomials
    \(
        \CS Q_{1,1} + \CS Q_{2,2} + \cdots + \CS Q_{d,d}
    \)
    where we have (combinatorially)
    \begin{equation}
        \CS Q_{j,j} = \sum_{1 \leq i_1 < \ldots < i_j \leq d} \CS S_{i_1} \cdots \CS S_{i_j}
        \enspace .
    \end{equation}

    We emphasise that the last sum is not meant to be implemented in practice
    in a na\"{i}ve way as it would take an exponential amount of time to be
    computed.
\end{Remark}

\subsection{Tuning precision.} 
In this section, we only work with algebraic systems that 
meet the certain regularity conditions from Drmota--Lalley--Woods
Theorem~\cite{drmota1997systems}.



\begin{proposition}
Consider a multiparametric combinatorial specification
\begin{equation}
    \vec {\mathcal Y} = \vec \Phi(\vec {\mathcal Y}, \mathcal Z, 
    \vec{\mathcal U})    
\end{equation}
whose corresponding system of equations
is either rational or algebraic.
Suppose that we sample objects
from the class \( \mathcal F = \mathcal Y_1 \) with target 
expected sizes \( (n, \nu_1 n, \ldots, \nu_d n) \),
where \( \nu_i \) are constants, \( n \to \infty \).
Let \( F(z, \vec u) \) be the multivariate generating function corresponding 
to the class \( \mathcal F \),
and let \( (z^\ast, \vec u^\ast) \) be the target tuning vector. 
Then, there exists \( \varepsilon = \Theta(1/Poly(n)) \)
such that the points \( (z, \vec u) \) from
the \( \varepsilon \)-ball centered at \( (z^\ast, \vec u^\ast) \) intersected
with the set of feasible points
\begin{equation*}
    \Big\{
        (z, \vec u) \in \mathbb R^{1+d} \mid
        \vec Y(z, \vec u) \geq \vec \Phi(\vec Y(z, \vec u), z, \vec u),
        \
        \| (z^\ast - z, \vec u^\ast - \vec u) \| \leq \varepsilon
    \Big\}
\end{equation*}
yield expectations within \( O(1) \) of target expectations:
\begin{eqnarray*}
    z F'_z(z, \vec u) / F(z, \vec u) &=& n + O(1)
    \enspace ,
    \\
    u_i F'_{u_i}(z, \vec u) / F(z, \vec u) &=& \nu_i n + O(1)
    ,
    \
    i\in\{1, \ldots, d\}
    \enspace .
\end{eqnarray*}
\end{proposition}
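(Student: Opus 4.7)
My plan is to exploit the Drmota--Lalley--Woods singular expansion of the multivariate generating function $F(z, \vec u)$ in order to compute the expectation map $\vec E(z, \vec u) := (z F'_z/F,\ u_1 F'_{u_1}/F,\ \ldots,\ u_d F'_{u_d}/F)$ explicitly, bound its Jacobian in a neighbourhood of $(z^\ast, \vec u^\ast)$ by a polynomial in $n$, and conclude via a mean-value inequality that a perturbation of polynomially small norm produces only an $O(1)$ change in each coordinate.

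First, I would invoke the Drmota--Lalley--Woods theorem to obtain a uniform singular expansion
\[
F(z, \vec u) = a_0(\vec u) - b_0(\vec u)\left(1 - \frac{z}{\rho(\vec u)}\right)^{t} + \text{lower-order terms},
\]
valid for $\vec u$ in a compact neighbourhood of $\vec u^\ast$ and $z$ in a slit neighbourhood of $\rho(\vec u)$; here $a_0, b_0, \rho$ are analytic and $t$ is a fixed rational exponent (e.g.\ $t = 1/2$ in the strongly-connected algebraic case, or $t = -1$ for the pole-type rational case). Since the coordinate $z\partial_z \log F$ behaves like a constant divided by $(1 - z/\rho(\vec u))$ up to lower order, imposing $z^\ast F'_z(z^\ast, \vec u^\ast)/F(z^\ast, \vec u^\ast) = n$ forces $1 - z^\ast/\rho(\vec u^\ast) = \Theta(n^{-\beta})$ for some $\beta \in \{1, 2\}$ depending on $t$: in other words, the target tuning point sits at a distance of purely polynomial order from the boundary of the feasible set.

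Next, I would differentiate $\log F$ twice with respect to the logarithmic variables $(\log z, \log \vec u)$. By Proposition~\ref{proposition:expected:value} and Corollary~\ref{corrolary:convexity}, this Hessian coincides with the covariance matrix $\Cov_{(z,\vec u)}(\vec N)$ and equals the Jacobian of the expectation map $\vec E$. Using the singular expansion above, each entry of this Hessian is dominated by a factor of $(1 - z/\rho(\vec u))^{-2}$ possibly multiplied by a bounded analytic quantity involving $\nabla_{\vec u} \rho(\vec u)$ and $\nabla_{\vec u} b_0(\vec u)$. On the intersection of the $\varepsilon$-ball with the feasible set $\{z < \rho(\vec u)\}$, as long as $\varepsilon = o(n^{-\beta})$ the singular denominator stays of the same order $n^{-\beta}$, so the Jacobian norm is bounded by $O(n^{2\beta})$. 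A mean-value inequality then yields
\[
\left\|\vec E(z, \vec u) - \vec E(z^\ast, \vec u^\ast)\right\|_\infty \leq O(n^{2\beta}) \cdot \varepsilon,
\]
and choosing $\varepsilon = c\, n^{-(2\beta + 1)} = \Theta(1/\mathrm{Poly}(n))$ produces the required $O(1)$ deviation from the target expectations $(n, \nu_1 n, \ldots, \nu_d n)$.

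The main technical obstacle is uniformity: the singular expansion and the Hessian estimates must be controlled jointly over the whole $\varepsilon$-ball, with all $d+1$ variables perturbed simultaneously, rather than one coordinate at a time. This amounts to showing that the analytic functions $\rho, a_0, b_0$ are Lipschitz on a fixed neighbourhood of $\vec u^\ast$ (which is free from DLW regularity) and that the feasibility constraint $z \leq \rho(\vec u)$ still admits ample room inside the $\varepsilon$-ball. The latter is precisely why the exponent $2\beta + 1$ is dictated by the singular geometry: it keeps the perturbed point at the same singular order as $(z^\ast, \vec u^\ast)$, and simultaneously guarantees that the intersection with the feasible set is non-empty for every admissible direction of perturbation.
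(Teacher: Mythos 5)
Your argument is sound and proves the stated existence of a polynomially small \( \varepsilon \), but the key step differs from the paper's. Both proofs start the same way: substitute the pole-type expansion (rational case) or the Drmota--Lalley--Woods square-root expansion (algebraic case) into the expectation formula to locate the tuning point at distance \( 1 - z^\ast/\rho(\vec u^\ast) = \Theta(n^{-\beta}) \) from the singular boundary, with \( \beta = 1 \) or \( 2 \). After that the paper works with the \emph{inverse} parametrisation: it views \( n \mapsto (z^\ast(n), \vec u^\ast(n)) \) as a curve, applies the mean value theorem to this curve, and takes \( \varepsilon \) to be the displacement of the tuning vector when the target size moves from \( n \) to \( n + O(1) \), which gives the sharper exponents \( \varepsilon = O(n^{-2}) \) (rational) and \( O(n^{-3}) \) (algebraic). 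You instead bound the \emph{forward} map: the Jacobian of the expectation vector, i.e.\ the covariance matrix \( \nabla^2 \log C(e^{\vec\xi}) \), uniformly over the ball, and conclude by a Lipschitz estimate. This has a real advantage: it controls arbitrary perturbation directions inside the \( \varepsilon \)-ball, whereas the paper's curve-based argument only tracks displacements along the one-parameter family of tuning points and leaves transverse directions implicit. The price is a cruder exponent: your blanket bound \( (1 - z/\rho(\vec u))^{-2} \) on the Hessian entries overshoots the true order \( (1 - z/\rho(\vec u))^{-3/2} \) in the algebraic case (the dominant second derivative of \( \log\bigl(a_0 - b_0(1-z/\rho)^{1/2}\bigr) \) scales as \( (1-z/\rho)^{-3/2} \)), so a tight version of your computation would recover \( \varepsilon = \Theta(n^{-3}) \) rather than \( n^{-5} \); similarly \( n^{-2} \) rather than \( n^{-3} \) in the rational case. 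Since the proposition only asks for some \( \varepsilon = \Theta(1/Poly(n)) \), this looseness is harmless. The uniformity caveat you raise (two-fold differentiability of the singular expansion, Lipschitz continuity of \( \rho, a_0, b_0 \) near \( \vec u^\ast \)) is settled by the analyticity guaranteed by the Drmota--Lalley--Woods framework, so it is a technical point to state rather than a gap.
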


\begin{proof}
Let us show that \( z^\ast \), as a function of \( n \), satisfies
\begin{equation}
    \label{eq:target:tuning}
\begin{cases}    
    z^\ast(n) \sim \rho(1 - \alpha / n) ,
    & \mathcal F \text{ is rational;}
    \\
    z^\ast(n) \sim \rho(1 - C / n^2) ,
    & \mathcal F \text{ is algebraic.}
\end{cases}
\end{equation}
Here, \( \alpha \) is a positive integer depending on the rational system, \( C
\) is a generic constant. We also note that the same asymptotics is valid
for each coordinate of the vector \( \vec u^\ast(n) \), up to multiplicative
constants depending on \( \nu_i \) and the values of \( \alpha \) and \( C \).

For rational systems, there exist analytic functions
\( \beta(z, \vec u) \), \( \rho(\vec u) \)
and a positive integer \( \alpha \)
such that
\begin{equation}
    \label{eq:rational:asymptotic:expansion}
    F(z, \vec u) \sim \beta(z, \vec u) (1 - z/\rho(\vec u))^{-\alpha}    
    \enspace , \quad
    z \to \rho(\vec u)
    \enspace .
\end{equation}
After substituting the asymptotic expansion~\eqref{eq:rational:asymptotic:expansion}
into~\eqref{eq:expectations},
we obtain the first part of~\eqref{eq:target:tuning}. 

For algebraic systems, according to 
Drmota--Lalley--Woods Theorem~\cite{drmota1997systems},
there exist analytic functions
\( \alpha(z, \vec u), \beta(z, \vec u), \rho(\vec u) \) such that
as \( z \to \rho(\vec u) \),
\begin{equation}
    \label{eq:algebraic:asymptotic:expansion}
    F(z, \vec u) \sim \alpha(z, \vec u) - 
    \beta(z, \vec u) (1 - z / \rho(\vec u))^{1/2}
    \enspace .
\end{equation}
Again, substituting this asymptotic expansion into~\eqref{eq:expectations},
we obtain
\begin{equation}
    z^\ast (n) \dfrac{\beta}{2 \rho \alpha}
    \left(
        1 - \dfrac{z^\ast(n)}{\rho}
    \right)^{-1/2} \sim n
    \enspace .
\end{equation}
Taking into account that \( z^\ast(n) = \rho+o(1) \), this implies the second
part of~\eqref{eq:target:tuning}. Similarly, this can be applied to each
coordinate of \( \vec u \), not only to \( z \).
                                    
Let us handle the tuning precision. We use the mean value theorem to bound \(
\varepsilon \). Let \( m = n + O(1) \). Then,
\begin{equation*}
    \varepsilon^2 \geq \| (z^\ast(n)-z^\ast(m),
        \vec u^\ast(n)-\vec u^\ast(m)) \|^2
    =
    (z^\ast(n) - z^\ast(m)^2 + 
    \sum_{i=1}^d (u_i^\ast(n) - u_i^\ast(m))^2
    \enspace .
\end{equation*}
By the mean value theorem, there exist numbers
\( (n'_i)_{i=0}^d \) from the interval \( [n,m] \) such that
\begin{eqnarray*}
    z^\ast(n) - z^\ast(m) &=& (n-m) \dfrac{d z^\ast}{dn}(n'_0)\ ,
    \\
    u_i^\ast(n) - u_i^\ast(m) &=&
        (n-m) \dfrac{d u_i^\ast}{dn} (n'_i) ,\
    i \in \{ 1, \ldots, d \}\ .
\end{eqnarray*}
Thus, as \( n-m = O(1) \), we obtain
\[
    \varepsilon^2 \geq
    O(1) \left[
        \left(
            \dfrac{d z^\ast}{dn} (n'_0)
        \right)^2
        + 
        \sum_{i=1}^d
        \left(
            \dfrac{d u_i^\ast}{dn}(n'_i)
        \right)^2
    \right]
    \enspace .
\]
Since \( n'_i = n + O(1) \),
after substituting~\eqref{eq:target:tuning} and expressing the derivatives,
we obtain the bound \( {\varepsilon = O(n^{-2})} \) for rational grammars and
\( {\varepsilon = O(n^{-3})} \) for algebraic specifications.
\end{proof}
\begin{Remark}
    If one uses the \emph{anticipated rejection} principle for sampling the
    objects of approximate size \( n + O(1) \), in effect rejecting objects
    smaller than \( n - O(1) \) and ``killing'' the generation of objects whose
    size exceeds \( n + O(1) \),
    it is possible to have a more relaxed bound \( \varepsilon = O(n^{-2}) \)
    for the case of algebraic specifications. Even though the expected size of
    generated objects will be smaller than \( n \), so that we will need a large
    number of restarts, the total amount of generated atoms will be
    nevertheless linear in \( n \).
    We refer to~\cite[Theorem 4.1]{BodGenRo2015} for further discussion. 
\end{Remark}
\begin{Remark}
    Under an extra frequency rejection (independently of the structure size)
    \emph{it is not possible}
    to get rid of the assumption of strong connectivity and get a general
    estimate on the complexity of rejection-based sampling for arbitrary
    combinatorial specifications.
    Let us recall that Banderier, Bodini, Ponty and Bouzid give combinatorial classes
    with non-continuous parameter distributions~\cite{banderier2012diversity}.
    For instance, consider the combinatorial class
\begin{equation}
    \CS F = \Seq ( \CS Z^3 )         \Seq ( \CS U   \CS Z^3) +
            \Seq ( \CS U^2 \CS Z^3 ) \Seq ( \CS U^3 \CS Z^3)
\end{equation}
in which all the structures have parameter frequencies in the intervals \( (0,
    \tfrac13) \) and \( (\tfrac23, 1) \). Certainly, tuning the sampler for a
    target frequency inside the interval \( (\tfrac13, \tfrac23) \) yields a
    rejection sampler which never stops as there is no structures of demanded
    frequency.

For this reason we restrict our attention on two important subclasses of
    combinatorial specifications, i.e.~strongly connected rational and
    algebraic languages.  Due to Bender and Richmond~\cite{BenderCLT} both
    classes follow a multivariate Gaussian law with linear expectation and
    standard deviation.  In consequence, corresponding multiparametric
    Boltzmann samplers work in linear time if we accept a linear tolerance for
    the size $[(1-\epsilon)n,(1+\epsilon)n]$ and a square root tolerance for
    the parameters $[f-\kappa/\sqrt{n},f+\kappa/\sqrt{n}]$.
\end{Remark}

\section{Samplers for rational grammars}
\label{section:rational:grammars}
Recall that a strongly connected rational grammar
\begin{equation}
    \label{eq:rational:grammar}
    \vec F = \vec \Phi(\vec F, \vec z)
\end{equation}
is a specification corresponding to a rational language
 whose dependency graph is strongly connected. State and transitions
 of the associated automaton correspond to classes \(\vec F = (F_1, \ldots, F_m)\)
 and to appropriate monomials in
the system~\eqref{eq:rational:grammar}, respectively.

For rational samplers, we decide to implement the strategy of
\emph{interruptible sampling}, introduced in~\cite{bacher2013exact} as the
so-called \emph{Hand of God} principle.  The idea of anticipated rejection is
also discussed in~\cite{BodGenRo2015}.  We start with fixing two distinguished
states of the automaton, a starting one and a final (terminal) one. The
starting and the terminal states may coincide. Next, we construct a tuned
variant of the corresponding singular sampler. Specifically, tune it with
arbitrarily high, yet still feasible precision. In essence, it is enough to
tune to expected quantities exceeding the target ones.  While tuning, we add a
constraint \( \| \vec v \| \leq M \) where \( \vec v \) contains all the
variables \( \vec F \) and \( \vec z \), and \( M \) is a logarithm of a large
number, say \( M = 40 \). This constraint is required because otherwise the
value of associated generating functions tends to infinity as \( \vec z \)
approaches the singular point. Moreover, by doing so we will compute branching
probabilities with an error no more than \( O(e^{-M}) \).  Under these
conditions, the resulting sampler is unlikely to stop with output size less
than the target one.  Finally, we run the sampler from its initial state and
continue sampling until the target structure size is attained.  From that
moment on, we wait for the sampler to naturally reach its final state at which
point the process is interrupted.  In the following proposition we show that
such a sampling procedure is actually an efficient generation scheme.

\begin{proposition}
\label{proposition:interruptible}
	Let $n$ be the target size of an interruptible sampler \(\Gamma \mathcal{S}\) associated
	with a strongly connected rational system \(\mathcal{S}\). Then, the following assertions hold:
	\begin{enumerate}
        \item\label{item:interruptible:a} structures are sampled from a uniform, conditioned on the (composition) size,
		distribution;
    \item\label{item:interruptible:b} the size of the generated structures is \(
        n + O(1) \) in probability where the constant error term depends solely on $\mathcal{S}$.
	\end{enumerate}
\end{proposition}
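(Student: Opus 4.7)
The plan is to treat the two claims separately: uniformity follows from a direct probability computation on paths in the automaton, while the size bound reduces to a standard hitting-time estimate for a finite irreducible aperiodic Markov chain.

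For assertion~\ref{item:interruptible:a}, I interpret the sampler as a random walk on the automaton's states. A transition $i \to j$ of monomial weight $w$ is chosen at state $i$ with probability $\vec z^{w} F_j(\vec z)/F_i(\vec z)$, and these probabilities sum to one by the fixed-point equation $F_i = \Phi_i(\vec F, \vec z)$. A path $\omega = (s_0, s_1, \ldots, s_m)$ produced by the sampler thus has probability
\begin{equation*}
    \prod_{k=0}^{m-1} \vec z^{\,w_k} \, \frac{F_{s_{k+1}}(\vec z)}{F_{s_k}(\vec z)}
    = \vec z^{\,\vec p(\omega)} \, \frac{F_{s_m}(\vec z)}{F_{s_0}(\vec z)},
\end{equation*}
which depends on the path only through its starting and ending states and its composition size $\vec p(\omega)$. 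The interruption rule simply enforces that $s_0$ is the initial state and $s_m$ is the terminal state, leaving the above probability intact. Hence any two admissible outputs of the same composition size carry equal probability, which is uniformity.

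For assertion~\ref{item:interruptible:b}, I view the same sampler as a finite-state Markov chain $P$ on the automaton. Strong connectivity of $\mathcal S$ renders $P$ irreducible, and the aperiodicity assumption of the specification transfers to $P$. Under the tuning constraint $\|\vec v\| \leq M$, the values $F_i(\vec z)$ and the coordinates of $\vec z$ lie in a compact set bounded away from zero and infinity, so every nonzero entry of $P$ admits a positive lower bound depending only on $\mathcal S$ and $M$. Consequently the spectral gap of $P$ is bounded below uniformly in $n$, and the hitting time of the terminal state from any starting state has a geometric tail with parameters independent of $n$. Decomposing an interruptible run at the moment its size first exceeds $n$, the post-threshold portion then contributes a geometrically tailed number of transitions, each adding an atomic weight bounded by the maximum monomial weight in $\mathcal S$. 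This yields an overshoot of $O(1)$ in probability with constants depending only on $\mathcal S$ and $M$.

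The main obstacle is guaranteeing that the overshoot bound is genuinely uniform in $n$, since the tuning vector itself varies with $n$ and approaches the singular locus as $n \to \infty$. The explicit cap $\|\vec v\| \leq M$ built into the sampler construction is exactly what makes this work: it confines the tuning to a compact subset of the convergence domain, preserving a uniform lower bound on all branching probabilities and hence a uniform spectral gap for $P$. One should also verify that the probability of the sampler reaching size $n$ at all is overwhelming; this follows from the same analysis, since every transition almost surely adds a bounded positive weight and the chain never gets absorbed before the threshold under the interruptible rule.
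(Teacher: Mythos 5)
Your proposal is correct and follows essentially the same route as the paper: uniformity comes from the telescoping product of transition probabilities $\vec z^{\Delta\vec n}\,F_{s_{k+1}}/F_{s_k}$, leaving a weight proportional to $\vec z^{\vec n}$ for every admissible output, and the $n+O(1)$ size bound comes from analysing the post-threshold phase as the hitting time of the final state of a finite irreducible chain. The only cosmetic difference is that you extract a geometric tail (via the cap $\|\vec v\|\le M$ giving uniform lower bounds on branching probabilities) where the paper simply invokes finiteness of the expected absorption time together with Markov's inequality; your explicit remark on uniformity of the constants is a reasonable refinement of the same argument, and the appeal to aperiodicity is unnecessary.
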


\begin{proof}
	Let $\omega$ be a structure generated by the interruptible sampler \(\Gamma \mathcal{S}\).
	Assume w.l.o.g. that \(\mathcal{S} =\seq{S_1,\ldots,S_m} \) and moreover $S_1$ and $S_m$
	correspond to the associated automaton's starting and final state, respectively.
	We split the proof into two parts.

    Firstly, let us focus on the uniformity~\eqref{item:interruptible:a}.
    We show that conditioned on the vector of
    quantities \( \vec n \), the probability of a structure \( \omega \)
    with given number of atomic classes is proportional to \( \vec z^{\vec n} \).
	According to the underlying Boltzmann model, each transition $S_i \to S_j$
	taken by \(\Gamma \mathcal{S}\) happens with probability
	 \begin{equation}\label{eq:interruptible:transition}
        \mathbb P_{S_i \to S_j} = \vec z^{\Delta \vec n} \dfrac{S_j(\vec z)}{S_i(\vec
        z)}
    \end{equation}
    where \( \Delta \vec n \) denotes the change in the size of $\omega$
    following transition $S_i \to S_j$.

    Note however that while we trace the interruptible sampler
    generating $\omega$, the ratios of generating functions in~\eqref{eq:interruptible:transition} cancel out (with the exception of the final
     $S_m(\vec{z})$).
     In consequence, the
    probability \(\mathbb P_\omega\) that \(\Gamma \mathcal{S}\) generated
    the structure $\omega$
    becomes
    \begin{equation}
        \mathbb P_{\omega} =
            \vec z^{\sum \Delta \vec n}
            S_m(\vec z) = \vec z^{\vec n}
            S_m(\vec z)
    \end{equation}
    where the latter equality follows from the fact that
    the sum $\sum \Delta \vec n$ of the increments in size is equal to the final
    size $\vec n$. And so, if we
    condition on the composite size, i.e.~the vector of numbers of atoms, the
    distribution is indeed uniform.

    Let us turn to assertion~\eqref{item:interruptible:b}.
    Once the sampler passes the target size,
    it becomes a Markov chain with a single
    absorbing state $S_m$. The chain is irreducible, as the associated
    system is strongly connected, whereas all of the states $S_1,\ldots,S_{m-1}$ are
    not absorbing. Moreover, we can assume that once the target size is reached, the
    sampler starts a random walk in state $S_i$ where $i \neq m$ as otherwise
    our claim holds trivially.

    In consequence, the expected excess outcome size is proportional
    to the expected absorption time starting in the transient state $S_i$.
    This time, however, is known to be finite, see~\cite[Chapter III]{kemeny1960finite}.
	In conclusion, the expected outcome excess size is
	 necessarily finite. An application of Markov's inequality finishes the proof.
\end{proof}

\section{Sampling P\'{o}lya structures}
\label{section:polya:structres}
In the sequel, $X$ denotes the generating function of
$\mathcal{X}$, \( \epsilon \) is an empty sequence.
We present the algorithms from \cite{flajolet2007boltzmann} in order to make the
paper more self-contained.

\begin{algorithm}[ht!]
\caption{$\Gamma \Cycle(\mathcal{A})(\boldsymbol{z})$ }
\label{algorithm:cycle}
\begin{algorithmic}[1]

\REQUIRE{Parameters $\boldsymbol{z}$.}
    \ENSURE{A cycle $\Cycle(\mathcal{A})$.}

    \STATE{Let $K$ be a random variable  in $\mathbb{Z}_{>0}$ satisfying\\
\(
    \mathbb{P}(K = k)
    =
    - \frac{1}{F_{Cyc(\mathcal{A})}}
    \frac {\varphi(k)}{k} \
    \ln\left(
    1-A(\boldsymbol{z}^k)
    \right).
    \)}
\STATE Draw $k$ following the law of $K$.

    \STATE{Let $L$ be a random variable  in $\mathbb{Z}_{>0}$ satisfying\\
\(
    \mathbb{P}(L= \ell)
    =
    -\frac{( A(\boldsymbol{z}^k))^{\ell}}{\ell}
    \frac{1}{\ln(1-A(\boldsymbol{z}^k))}.
    \)}
    \STATE{Draw $\ell$ following the law of $L$,
    $M\gets\epsilon$.}

\FOR
{
    $i$ \emph{\textbf{from}} $1$ \textbf{\emph{to}} $\ell$
}
    \STATE $A_i\gets \Gamma\CS A(\boldsymbol{z}^k)$
    \STATE $M\gets M \cdot A_i$
\ENDFOR
\RETURN  $[M \ldots M]_{k \textrm{ times}}$
\end{algorithmic}
\end{algorithm}
\begin{algorithm}[ht!]
\caption{$\Gamma \MSet(\mathcal{A})(\boldsymbol{z})$}
\label{algorithm:mset}
\begin{algorithmic}[1]

\REQUIRE{Parameters $\boldsymbol{z}$.}
    \ENSURE{A multi-set $\MSet(\mathcal{A})$.}

    \STATE {Let $K$ be a random variable in $\mathbb{Z}_{\geq 0}$ satisfying\\
\(
    \mathbb{P}(K \leq k)
    =
    \prod\limits_{j>k}
    \exp\left(
        -\frac{1}{j}
        A(\boldsymbol{z}^j)
    \right).
    \)}
    \STATE {Draw $k$  following the law of $K$, \(
    S \gets \epsilon
    \).}
    \IF
    {
        \( k > 0 \)
    }

    \FOR
    {
        $j$ \emph{\textbf{from}} $1$ \textbf{\emph{to}} $k-1$
    }
    \STATE{Draw
            \(
            q
            \sim
            \mathrm{Poiss}\big(
                \frac{1}{j}
                A(\vec z^j)
            \big)
    \).
    }
        \FOR
        {
            \( i \) \emph{\textbf{from}} \( 1 \) \textbf{\emph{to}} \( q \)
        }
            \STATE \( A_i\gets \text{$j$ copies of } \Gamma\mathcal{A}(\boldsymbol{z}^j) \)
            \STATE \( S \gets S \cdot A_i \)
        \ENDFOR

    \ENDFOR
    \STATE{Draw \(
            q
            \sim
            \mathrm{Poiss}_{\geq 1}
            \left(
                \frac{1}{k}
                A(\vec z^k)
            \right)
    \).
    }
    \FOR{
        $i$ \emph{\textbf{from}} $1$ \textbf{\emph{to}} $q$
    }
        \STATE $A_i\gets \text{$k$ copies of } \Gamma\mathcal{A}(\boldsymbol{z}^k)$
        \STATE $S\gets S \cdot A_i$\\
    \ENDFOR
    \RETURN $ S $
\ENDIF
\end{algorithmic}
\end{algorithm}

\end{document}